\definecolor{mygreen}{RGB}{28,172,0} 
\definecolor{mylilas}{RGB}{170,55,241}
\newcommand{\RR}{\mathbb{R}}
\newcommand{\st}{\quad\textrm{s.t.}\quad}
\newcommand*{\norm}[1]{\left\Vert#1\right\Vert}
\newtheorem{prop}{Proposition}[section]
\newtheorem{thm}[prop]{Theorem}
\newtheorem{defn}[prop]{Definition}
\newtheorem{lem}[prop]{Lemma}
\newtheorem{algo}[prop]{Algorithm}
\theoremstyle{definition}
\newtheorem{example}[prop]{Example}
\DeclareMathAlphabet{\pazocal}{OMS}{zplm}{m}{n}
\newcommand\nocaption{%
    \renewcommand\p@subfigure{}
    \renewcommand\thesubfigure{\thefigure\alph{subfigure})}
}
\begin{document}
\author{Christian Kanzow\thanks{University of Würzburg, Institute of Mathematics, Campus Hubland Nord,
         Emil-Fischer-Str.\ 30, 97074 Würzburg, Germany; christian.kanzow@uni-wuerzburg.de} \and 
         Felix Wei{\ss}\thanks{University of Würzburg, Institute of Mathematics, Campus Hubland Nord,
         Emil-Fischer-Str.\ 30, 97074 Würzburg, Germany; felix.weiss@uni-wuerzburg.de}}
\title{A Modified Exact Penalty Approach for General Constrained $ \ell_0 $-Sparse Optimization Problems}
\date{September 2, 2025}
\maketitle

{
\noindent 
\small\textbf{Abstract.} We consider a general class of constrained
optimization problems with an additional $ \ell_0 $-sparsity term in the 
objective function. Based on a recent reformulation of this
difficult $ \ell_0 $-term, we consider a nonsmooth penalty approach
which differs from the authors' previous work by the fact that 
it can be directly applied to problems which do not necessarily
contain nonnegativity constraints. This avoids a splitting of
free variables into their positive and negative parts, reduces
the dimension and fully exploits the one-to-one correspondence
between local and global minima of the given $ \ell_0 $-sparse
optimization problem and its reformulation. The penalty approach is shown
to be exact in terms of minima and stationary points. Since the
penalty function is (mildly) nonsmooth, we also present practical
techniques for the solution of the subproblems arising within the
penalty formulation. Finally, the results of an extensive numerical
testing are provided. \\

\noindent
\small\textbf{Keywords.}
Sparse optimization; global minima; local minima;
stationary points; exact penalty function\\

\noindent
\small\textbf{Mathematical Science Classification.}
65K05, 90C26, 90C30, 90C46
\par\addvspace{\baselineskip}
}

\section{Introduction}\label{Sec:Intro}

In this paper, we consider the sparse optimization problem of the form
\begin{equation}
	\min_x f(x) + \rho \norm{x}_0, \quad g(x) \in X, \tag{SPO} \label{SPO}
\end{equation} 
where $f \colon \RR^n \to \RR $ and $g \colon \RR^n \to \RR^m$ are 
smooth functions, $ X \subseteq 
\RR^n $ is a nonempty and closed set, $ \rho > 0 $ a given scalar, and
\begin{equation*}
	\norm{x}_0 := \textrm{number of nonzero components } x_i 
	\textrm{ of } x.
\end{equation*}
Solving the \eqref{SPO} is known to be a difficult task, due to the
combinatorial nature of the optimization problem induced by the 
nonconvex and discontinuous $\ell_0$-norm. It was shown in 
\cite{9030937}, that \eqref{SPO} belongs to the class of 
NP-hard problems, when paired with a quadratic function $f$.

The sparse optimization problem is closely related to the cardinality constrained optimization problem, where the $ \ell_0 $-term in the 
objective function is replaced by a constraint of the form $\norm{x}_0 \le \kappa$. In the survey article \cite{Tillmann2021}, it was shown that any solution of \eqref{SPO} solves the corresponding cardinality constrained problem with $\kappa = \norm{x^*}$. However, in general, it is not true 
that a solution $x^*$ of the cardinality constrained problems solves \eqref{SPO} for some suitable $\rho$. Hence, we cannot substitute the program \eqref{SPO} by a suitable cardinality constrained optimization
problem.

Following \cite{le2015dc}, solution strategies for $\ell_0$-norm 
regularized problems can be divided into the following categories: 
(a) convex approximations, (b) nonconvex approximations, and 
(c) nonconvex exact reformulations. The first approach typically
replaces the $ \ell_0 $-term by the convex $ \ell_1 $-norm and is
particularly popular for LASSO-type problems, cf.\ \cite{Tibshirani2018}.
On the other hand, in many applications, this technique does not
yield the desired sparsity, and sometimes it is not even of any help
in reducing the sparsity like for portfolio optimization problems, cf.\
\cite{KanzowWeiß2023}. The techniques based on (b) try to approximate the 
$ \ell_0 $-term by a suitable (smoother) function. Popular
realizations of this idea use the $ \ell_p $-quasi norm for 
$ p \in (0,1) $, the SCAD (= smoothly clipped absolute deviation) \cite{FanLi2001}, MCP (= minimax concave penalty) \cite{Zhang2010}, 
PiE (= piecewise exponential) \cite{Nguyen2015}, or the transformed $ \ell_1 $ function \cite{ZhangYin2018}, see also the list of functions
in \cite{Cui-Pang-2021}. Note that all these approximations of the
$ \ell_0 $-quasi norm are at least continuous, most of them even
Lipschitz continuous.

Regarding category (c), there exist different exact reformulations
of problem \eqref{SPO}. Assuming that there are finite lower and upper bounds
as part of the standand constraints $ g(x) \in X $, it is straightforward 
to obtain a mixed-integer formulation of the original problem, see \cite{Bienstock1996, Tillmann2021}. Hence, for a convex quadratic $f$
and linear constraints, mixed-integer QP solvers can 
exploit such a reformulation in order to find a global minimum (given 
enough CPU time). Furthermore, there exist reformulations obtained by DC-approaches (DC = difference of convex functions) as seen, for instance, in \cite{le2015dc}, where DC-functions are used to approximate the $\ell_0$-norm, whereas an exact reformulation of the $\ell_0$-norm (though mainly in the context of cardinality-constrained 
problems) was introduced in \cite{Gotoh2017}. Finally, the paper
\cite{Feng2018} presents a formulation with the $ \ell_0 $-term b
being replaces by complementarity constraints. 

The formulation from \cite{Feng2018} is the basis of our approach 
and was already expanded in \cite{KanzowSchwartzWeiss2022}, 
where, in particular, we provide a complete equivalence of local and global minima between the original problem \eqref{SPO} and the corresponding
optimization problem with complementarity constraints. Note that this
is in contrast to a related approach introduced in \cite{BKS-2016}
for cardinality-constrained programs where an equivalence
between local minima does not hold. The recent paper 
\cite{KanzowWeiß2023} generalizes the technique from 
\cite{Feng2018,KanzowSchwartzWeiss2022} by introducing a whole class
of reformulations of \eqref{SPO}, all of which rely on the aforementioned
idea of replacing the $ \ell_0 $-term by suitable complementarity constraints. This class of reformulations was then used in order
to motivate an exact penalty approach for the solution 
of \ref{SPO}.

However, the exact penalty technique from \cite{KanzowWeiß2023} assumes
that the general constraints $ g(x) \in X $ contain nonnegativity
constraints on the variables $ x $. Otherwise, free variables have
to be splitted into the positive and negative parts, which then
increases the number of variables and destroys the shown one-to-one
correspondence between (local) minima. Furthermore, this splitting
might violate suitable constraint qualifications which are required
to verify convergence results for the underlying exact penalty approach.

In this paper, we further elaborate on the general class of reformulations 
from \cite{KanzowWeiß2023} and, in particular, present a penalty approach
which can be applied to the general problem \eqref{SPO} without
the explicit assumption that the variables $ x $ are nonnegative.
In particular, we show that our new penalty approach is exact in terms
of (local/global) minima and, most importantly, also with respect
to stationary points. In contrast to the exact penalty approach 
from \cite{KanzowWeiß2023}, the objective function of the resulting
penalty subproblems is (mildly) nonsmooth. Nevertheless, we will show
that these subproblems can still be solved efficiently by suitable
methods.

The paper is structured as follows: In Section~\ref{Sec:Background}, we
state some background material from variational analysis, recall 
the details for the reformulation of problem \eqref{SPO} from the 
paper \cite{KanzowWeiß2023}, and consider a so-called tightened
nonlinear program associated to a local minimum of
\eqref{SPO}. The new exact penalty approach and related exactness results 
are given in Section~\ref{section:penalty}.
In particular, this includes exactness with respect to stationary points
which, from a practical point of view, are particularly relevant. 
Section~\ref{section:solstrat} presents two solution strategies which 
aim to find stationary points of the penalized subproblems, one is 
based on a projected gradient method, the other one on a proximal 
gradient-type approach. We then present some numerical results in 
Section~\ref{section:numerics} based on a variety of different
classes of applications, namely portfolio optimization, dictionary learning, and adversarial attacks on neural networks.´We close with some final
remarks in Section~\ref{Sec:FinalRemarks}.
 
Notation: In the various parts of this paper, we address via 
$$
   I_0(x):= \{ i \, | \, x_i = 0\}
$$ 
the set of indices for which $x$ vanishes. Furthermore, we write $x \circ y$ for the Hadamard product of $x$ and $y$, i.e.\ the componentwise multiplication of the two vectors. We abbreviate the canonical unit vector by $e_i \in \RR^{n}$, indicating that the single $1$ is in the $i$-th position, and additionally write $e := (1,1,...,1)^T \in \RR^{n}$. Since we will introduce sign constraints to our variables, we also denote with $\RR^n_{\ge 0}$ the cone of vectors with only nonnegative entries in $\RR^n$.

\section{Mathematical Background}\label{Sec:Background}

\subsection{Variational Analysis}

Here, we recall some results and definitions from the field of 
variational analysis that will be used in our subsequent theory.

To this end, let $X \subseteq \mathbb{R}^m$ be a nonempty and closed set. The \emph{(Bouligand) tangent cone} at a point $x \in X$ to the set $X$ is given by
$$
   T_X(x):= \{d \in\mathbb{R}^m \ : \ \exists d^k \to d, \exists 
   t_k\searrow 0 \ : x+t_kd^k \in X\}.
$$
The polar
$$
   N^F_X(x) := T_X(x)^\circ := \{v \in \mathbb{R}^m \ : \ v^Td \le 0, \ \forall d\in T_X(x)\}
$$
is the \emph{Fr\'echet normal cone} of $ x \in X $.
The \emph{limiting normal cone} by Mordukhovich is obtained 
from the Fr\'echet normal cone by passing to the outer limit, i.e.
$$
   N^{\lim}_X(x) := \limsup_{\bar{x} \to x} N^F_X(\bar{x}) := \{v \in \mathbb{R}^m \ : \ \exists x^k \to_X x, \ \exists v^k \to v \ : v^k \in N_X^F(x^k) \ \forall k\in \mathbb{N} \}.
$$
On the other hand, the standard \emph{normal cone (from convex analysis)} 
to a closed and convex set $X \subset \RR^n$ is defined by
$$
   N_X(x):=\{v \in \RR^n : v^T(y - x) \le 0, \forall y \in X\}
$$
for any $ x \in X $. Note that all these cones coincide for convex
sets, hence, we have $N_X(x) = N_X^F(x) = N_X^{\lim}(x)$ for all $x \in X$
with $ X $ closed, and convex. For the sake of completeness,
all these cones are defined to be empty at any point $ x \not\in X $.

We also need a notion of differentiability for a nonsmooth function,
for which different choices are possible. Here, we take the 
limiting subdifferential by Mordukhovich: Given a lower semicontinuous (lsc) function $f:\mathbb{R}^n \to \mathbb{R}$, we define
$$
   \partial f(x) :=\{ s \in \mathbb{R}^n \ : \ (s,-1) \in N_{epi(f)}^{\lim}((x,f(x)))\},
$$
where $ \text{epi}(f) $ denotes the epigraph of $f$. We call
$ \partial f(x) $ the \emph{limiting subdifferential} by Mordukhovich,
and each element $ s \in \partial f(x) $ a (limiting) subgradient.
This limiting subdifferential has some useful calculus rules. Some
of the basic ones, that will be used in our subsequent analysis,
are summarized in the following result.

\begin{lem}[Calculus for the limiting subdifferential]\label{lem:calcsub}
	We have the following calculus rules:
	\begin{enumerate}[label = (\roman*)]
		\item Let $f(x) = \sum_{i = 1}^n f_i(x_i),$ where $f_i : \RR \to \RR$ is lsc. Then
		$$\partial f(x) = \partial f_1(x_1) \times \partial f_2(x_2) \times \dots \times \partial f_n(x_n).$$
		\item Let $f : \RR^n \to \RR$ be locally Lipschitz and let $\varphi : \RR^n \to \RR$ be lsc. Then
		$$\partial(f + \varphi)(x)  \subset \partial f(x) + \partial\varphi(x).$$
		\item Let $f:\RR^n \to \RR$ be $C^1$ and $\varphi : \RR^n \to \RR$ be lsc. Then
		$$\partial \left(f + \varphi\right)(x) = \partial f(x) + \partial\varphi(x) = \nabla f(x) +\partial\varphi(x).$$
		\item Let $f : \RR^n \to \RR$ be $C^1$ and $\varphi : \RR^m \to \RR^n$ be locally Lipschitz continuous. At a point $x$, set $y = \varphi(x)$. It holds
		$$\partial (f \circ \varphi)(x) = \partial \left(\nabla f(y)^T \varphi\right)(x),$$
		where the subdifferential is taken with respect to the function $\nabla f(y)^T \varphi : \RR^n \to \RR$.
		\item Let $\varphi : \RR^n \to \RR$ be locally Lipschitz continuous and $g : \RR^m \to \RR^n$. Then
		$$\partial (\varphi \circ g)(x) \subset g'(x)^T\partial\varphi(x).$$
	\end{enumerate}
\end{lem}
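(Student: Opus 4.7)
The plan is to treat this as a reference-style lemma: each of (i)--(v) is a classical calculus rule for the Mordukhovich limiting subdifferential that can be found in the standard monographs (Rockafellar--Wets, \emph{Variational Analysis}, Chapters 8 and 10, or Mordukhovich, \emph{Variational Analysis and Generalized Differentiation I}). Rather than redeveloping variational analysis from scratch, I would cite these sources and then record the short arguments that specialize the general results to the forms stated here. The overall strategy is to derive (ii)--(v) from two more primitive tools, namely (a) the \emph{fuzzy sum rule} for Fr\'echet subgradients and (b) the fact that $ \partial f(x) = \{\nabla f(x)\} $ whenever $ f $ is continuously differentiable, combined with a limiting process to move from Fr\'echet to limiting subgradients.

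For part (i), I would first verify the product decomposition at the level of the Fr\'echet subdifferential via the definition, using that the epigraph of a separable sum has a product-type structure in the relevant variables, and then pass the decomposition through the outer limit that defines $\partial f(x)$; this is essentially Proposition~10.5 in Rockafellar--Wets. For parts (ii) and (iii), I would invoke the standard limiting sum rule for lsc functions under the qualification condition $ 0 \in \partial^\infty f(x) + \partial^\infty \varphi(x) \Rightarrow 0 $; in (ii) the local Lipschitz property of $ f $ makes its singular subdifferential $\partial^\infty f(x)$ trivial so the inclusion holds, and in (iii) the smoothness of $ f $ further forces equality because $ \partial f(x) = \{\nabla f(x)\} $ is a singleton and the same sum rule, applied to $ \varphi = (f+\varphi) + (-f) $, yields the reverse inclusion.

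For the composition rules (iv) and (v), I would use the scalarization principle together with the fact that $ g $, respectively $ f $, is smooth. In (iv) the key observation is that
\[
   f(\varphi(z)) - \nabla f(y)^T\varphi(z) = o(\|\varphi(z)-y\|)
\]
near $ x $, so the difference $ f\circ\varphi - \nabla f(y)^T\varphi $ is strictly differentiable at $ x $ with vanishing gradient; applying (iii) then identifies the two subdifferentials. In (v) I would use the standard chain rule of Mordukhovich (Theorem 10.49 in Rockafellar--Wets): since $ g $ is smooth and $ \varphi $ is locally Lipschitz, the qualification condition is automatic, and one obtains $ \partial(\varphi\circ g)(x) \subseteq g'(x)^T \partial \varphi(g(x)) $, which is precisely the asserted inclusion.

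The genuinely delicate point is the passage from Fr\'echet to limiting subdifferentials in (ii) and (v), because one must control the behavior of $(\varepsilon$-)subgradients along sequences $ x^k \to x $ without losing the Lipschitz-type bounds needed for the qualification conditions. This is the step where I expect most of the technical care to be required; everything else reduces to bookkeeping once the sum rule and the smooth chain rule are in place.
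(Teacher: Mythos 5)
Your proposal matches the paper's treatment: the paper proves this lemma purely by citation — (i) from Rockafellar–Wets Prop.\ 10.5, (ii)–(iv) from Mordukhovich's monograph (with exactly your observation that local Lipschitz continuity trivializes the singular subdifferential, so the qualification condition for the sum rule holds automatically), and (v) from the Rockafellar–Wets chain rule. Your sketch of how the general results specialize is consistent with those references, so this is essentially the same approach.
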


\begin{proof}
Statement (i) (on separable functions) follows from \cite[Prop 10.5]{Rockafellar1998}. The sum rule (ii) (with an 
inclusion) can be found in
\cite[Thm. 3.36]{Mordukhovich2006} and results from the fact that,
if one of the two functions is locally Lipschitz, then the corresponding
singular subdifferential from that reference contains the zero vector only.
The sum rule from (iii) (with equality) is taken from
\cite[Prop. 1.107]{Mordukhovich2006}. The subsequent chain rules
(iv) and (v) can be found in \cite[Thm. 1.110]{Mordukhovich2006}
and \cite[Thm. 10.6]{Rockafellar1998}, respectively.
\end{proof}

As a subdifferential of particular interest for our purposes, we have
the following one.

\begin{lem}\label{Lem:Subdiff}
Consider the function $ \varphi (x) := \rho \norm{x}_0 $ for some
$ \rho > 0 $. Then 
\begin{equation*}
	\partial \varphi (x) =
	\big\{ s \in \RR^n \, \big| \, s_i = 0 \textrm{ for all } i 
	\text{ with } x_i \neq 0 \big\} 
\end{equation*}
holds for all $ x \in \RR^n $.
\end{lem}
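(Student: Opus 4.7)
The plan is to exploit the separable structure of the $\ell_0$-norm and reduce the problem to a one-dimensional computation. Writing $\varphi(x) = \sum_{i=1}^n \psi(x_i)$, where $\psi:\RR\to\RR$ is defined by $\psi(0)=0$ and $\psi(t)=\rho$ for $t\neq 0$, each summand is lsc and so is $\varphi$. Lemma \ref{lem:calcsub}(i) then immediately gives
$$
\partial\varphi(x) \;=\; \partial\psi(x_1)\times\cdots\times\partial\psi(x_n),
$$
so it suffices to determine $\partial\psi(t)$ for every $t\in\RR$.

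Next, I would split into the two possible cases. If $t\neq 0$, then $\psi$ is locally constant (equal to $\rho$) near $t$, hence $C^\infty$ there, and consequently $\partial\psi(t)=\{\psi'(t)\}=\{0\}$. If $t=0$, the claim is $\partial\psi(0)=\RR$. I would establish this by computing the Fr\'echet subdifferential directly and using the standard inclusion $\hat\partial\psi(0)\subseteq\partial\psi(0)$. The cleanest way is a geometric argument on the epigraph: at $(0,0)$, the set $\mathrm{epi}(\psi)$ consists of the vertical ray $\{0\}\times[0,\infty)$ together with the horizontal strip $\RR\times[\rho,\infty)$, which sits a positive distance away. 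Hence the Bouligand tangent cone at $(0,0)$ is exactly $\{0\}\times[0,\infty)$, and its polar, the Fr\'echet normal cone, is $\RR\times(-\infty,0]$. In particular $(s,-1)\in N^F_{\mathrm{epi}(\psi)}((0,0))$ for every $s\in\RR$, which by the definition of the limiting subdifferential yields $\hat\partial\psi(0)=\RR$, and therefore $\partial\psi(0)=\RR$. Equivalently, one may argue analytically via
$$
\liminf_{t\to 0}\;\frac{\psi(t)-st}{|t|}\;=\;+\infty \quad\text{for every }s\in\RR,
$$
since $\psi(t)=\rho>0$ whenever $t\neq 0$, so every real $s$ is a Fr\'echet subgradient at $0$.

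Finally, I would combine the cases with the product formula to obtain the asserted characterization: for each index $i$ either $x_i\neq 0$ and the $i$-th component of any subgradient must vanish, or $x_i=0$ and the $i$-th component is unrestricted. The only mildly delicate point in the whole argument is the one-dimensional computation at $t=0$; once that is in place, everything else is a direct application of the separable-sum rule from Lemma \ref{lem:calcsub}(i).
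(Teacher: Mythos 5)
Your argument is correct, but it takes a different route from the paper in the trivial sense that the paper offers no proof at all: it simply cites the references \cite{Le2012,Durea2014} for this formula. What you supply is a self-contained derivation, and each step checks out: $\varphi$ is separable with lsc one-dimensional pieces $\psi$, so Lemma~\ref{lem:calcsub}(i) reduces everything to computing $\partial\psi(t)$; for $t\neq 0$ the function is locally constant, giving $\{0\}$; and at $t=0$ your epigraph computation is accurate, since $\mathrm{epi}(\psi)=\bigl(\{0\}\times[0,\infty)\bigr)\cup\bigl(\RR\times[\rho,\infty)\bigr)$ and the strip is at distance $\rho$ from the origin, so $T_{\mathrm{epi}(\psi)}((0,0))=\{0\}\times[0,\infty)$, whose polar is $\RR\times(-\infty,0]$, and every $(s,-1)$ lies in this Fr\'echet normal cone, hence in the limiting normal cone, giving $\partial\psi(0)=\RR$ (the equivalent analytic $\liminf$ computation you give is also fine, since $\psi(t)-\psi(0)=\rho$ for all $t\neq 0$). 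Assembling the components reproduces exactly the asserted set. In effect you have reconstructed the standard argument that the cited papers contain; what your version buys is that the lemma becomes independent of external references, resting only on the calculus rules already collected in Lemma~\ref{lem:calcsub} and the paper's own definitions of the tangent, Fr\'echet normal, and limiting normal cones.
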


\begin{proof}
See for instance \cite{Le2012,Durea2014}.
\end{proof}

We next introduce the dist function to an arbitrary set $X\subset \RR^n$ 
as the minimal distance of $x$ to this set, measured by an arbitrary norm,
i.e.
$$
   \text{dist}_X(x) := \inf \{\lVert y - x\rVert \ : \ y \in X\}.
$$
In case $X$ is closed, the infimum is, of course, attained (albeit
not necessarily unique), and we can replace the infimum by a minimum in 
this case. The dist function is famously known as an exact penalty function.
To this end, we first clarifiy what we consider as exactness from here on.

\begin{defn}
Let a problem $\min_{x \in C} f(x)$ be given and let $\phi(x)$ denote some merit function, which fulfills $\phi(x) = 0$ if $x \in C$ and $\phi(x) > 0$ otherwise. We call $\text{P}_{\alpha} (x) := f(x) + \alpha \phi(x)$ an
\emph{exact penalty function} if, for any (local) minimizer $x^*$ of $f$ over $C$, there is a finite value $\alpha^*>0$ such that, for all $\alpha \ge \alpha^*$, the point $x^*$ is also a (local) minimizer of $P_\alpha$.
\end{defn}

Note that, in the previous definition and the subsequent statements within
this subsection, the function $ f $ is not necessarily the objective
function from our given optimization problem \eqref{SPO}.

The following central result goes back to Clarke~\cite{Clarke1990} and 
clearly illustrates why the dist function plays such an important role 
for exact penalty results. The statement is phrased in a way that includes the idea of not penalizing the entire feasible set, but rather some constraints with the help of the dist function.

\begin{thm}\label{thm:distexact}
Let the problem $\min_{x\in C} f(x)$ be given, where $C \subset S$ and $f$ is Lipschitz of rank $L$ on $S$. If $x^*$ is a minimizer of $f$ over $C$, then $x^*$ minimizes the function $P_\alpha(x) := f(x) + \alpha \text{dist}_C(x)$ for all $\alpha \ge L$ over $S$. In addition, if $C$ is closed, then, for $\alpha > L$, any other point minimizing $P_\alpha$ over $S$ also lies in $C$ (and therefore minimizes $f$ over $C$).
\end{thm}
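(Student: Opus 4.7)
The plan is to prove the two parts separately, using the Lipschitz property of $f$ on $S$ together with an (approximate) projection of a candidate point onto $C$.

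For the first part, take an arbitrary $y \in S$; I want to show $P_\alpha(x^*) \le P_\alpha(y)$. Since $x^* \in C$, the distance term vanishes and $P_\alpha(x^*) = f(x^*)$. For $\varepsilon > 0$, pick $z \in C$ with $\norm{y - z} \le \text{dist}_C(y) + \varepsilon$ (using the definition of the infimum; closedness is not needed here because I let $\varepsilon \to 0$ at the end). Since $z \in C \subseteq S$ and $y \in S$, the Lipschitz property on $S$ gives $f(z) \le f(y) + L \norm{y - z}$. Combining this with the optimality of $x^*$ on $C$, which yields $f(x^*) \le f(z)$, I obtain
\[
   f(x^*) \;\le\; f(y) + L\bigl(\text{dist}_C(y) + \varepsilon\bigr)
   \;\le\; f(y) + \alpha\,\text{dist}_C(y) + L\varepsilon
   \;=\; P_\alpha(y) + L\varepsilon,
\]
where the second inequality uses $\alpha \ge L$. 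Letting $\varepsilon \searrow 0$ gives $P_\alpha(x^*) \le P_\alpha(y)$, hence $x^*$ minimizes $P_\alpha$ over $S$.

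For the second statement, assume $C$ is closed and $\alpha > L$, and suppose, for contradiction, that $y^* \in S$ minimizes $P_\alpha$ over $S$ but $y^* \notin C$. By closedness of $C$, the distance is attained, so there exists $z^* \in C$ with $\norm{y^* - z^*} = \text{dist}_C(y^*) > 0$. Since $z^* \in C \subseteq S$, I may again invoke the Lipschitz estimate $f(z^*) \le f(y^*) + L\norm{y^* - z^*}$. Together with $\text{dist}_C(z^*) = 0$, this gives
\[
   P_\alpha(z^*) \;=\; f(z^*) \;\le\; f(y^*) + L\,\text{dist}_C(y^*)
   \;<\; f(y^*) + \alpha\,\text{dist}_C(y^*) \;=\; P_\alpha(y^*),
\]
where the strict inequality follows from $\alpha > L$ combined with $\text{dist}_C(y^*) > 0$. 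This contradicts the minimality of $y^*$, hence any minimizer of $P_\alpha$ over $S$ must lie in $C$; such a point then trivially minimizes $f$ over $C$ by the first part of the theorem.

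I do not expect a serious obstacle here, since the argument is essentially two applications of the Lipschitz estimate to an (approximate) projection. The only subtle point is in the first part, where $C$ need not be closed, which forces me to use an $\varepsilon$-approximate projection rather than an exact one; this is why the first assertion only yields $\alpha \ge L$ while the strict separation in the second assertion needs $\alpha > L$ together with closedness of $C$ to guarantee that the distance is actually attained.
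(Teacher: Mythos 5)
Your argument is correct. The paper does not prove this statement at all---it is quoted as a classical result and attributed to Clarke's book---so there is no in-paper proof to compare against; your proof is essentially the standard one found in Clarke (1990, Prop.\ 2.4.3): one Lipschitz estimate applied to an (approximate) projection for the inequality $P_\alpha(x^*)\le P_\alpha(y)$, and a second, strict estimate at an attained projection to force any minimizer of $P_\alpha$ into $C$ when $\alpha>L$. Your handling of the first part via an $\varepsilon$-approximate projection is the right touch, since $C$ is not assumed closed there, and the concluding remark that a minimizer of $P_\alpha$ lying in $C$ automatically minimizes $f$ over $C$ follows, as you say, because $P_\alpha$ coincides with $f$ on $C$. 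No gaps.
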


This theorem can be extended to local minima and a locally Lipschitz continuous function $f$ simply by restricting $C \cap B_{\varepsilon}(x^*)$, where $B_{\varepsilon}(x^*)$ is a closed ball of radius $\varepsilon$ around $x^*$, such that the local minimizer $x^*$ solves $\min_{C \cap B_\varepsilon(x^*)} f(x)$. In this case $x^*$ is truly a local minimizer of $P_\alpha$ since one can prove the existence of a neighborhood $U$ of $x^*$ such that
$$
   \text{dist}_{C \cap B_\varepsilon(x^*)}(x) = \text{dist}_C(x), \quad \forall x \in U,$$
cf.\ \cite{YE20121642}.
Conversely, for $\alpha > L$ any additional minimizer $\overline{x}$ of $P_\alpha$ over $S \cap U$ fulfills exactly the property $\text{dist}_{C\cap B_\varepsilon(x^*)}(\overline{x}) = 0 \ \Longleftrightarrow \ \overline{x} \in C\cap B_\varepsilon(x^*)$.

Within the context of our sparse optimization problem \eqref{SPO}
the feasible set is given by the preimage $g^{-1}(X)$. Consequently,
we would have to apply the distance function result to the set
$ g^{-1}(X)$. The computationally more favorable choice, however, would be
to switch to the dist function with respect to $X$ at a point $g(x)$.
This leads to an exact merit function precisely under the metric subregularity condition.

\begin{defn}
We say that \emph{metric subregularity} of rank $\kappa > 0$ is fulfilled for $g(x) \in X$ at a feasible point $x^*$, if there is a neighborhood $U$ of $x^*$ such that
$$
	\text{dist}_{g^{-1}(X)} (x) \le \kappa \, \text{dist}_X(g(x)) \quad \forall x \in U. 
$$
\end{defn}

As an immediate consequence, we obtain that, under metric subregularity,
at least locally and for $ f $ being locally Lipschitz, the penalty function
$$
   \hat P(x) = f(x) + \alpha_* \kappa \text{dist}_X(g(x))
$$
is exact. Consequently, given 
a local minimizer $x^*$, one necessarily has $0 \in \partial \hat P(x^*)$
by Fermat's rule for the limiting subdifferential.
Application of Lemma~\ref{lem:calcsub}$(ii)$ then gives \begin{equation} 0 \in \partial f(x^*) + \alpha^*\kappa \partial \text{dist}_X(g(x^*)).\label{eqn:statdist}\end{equation}
The subdifferential of the dist function is well-known.

\begin{lem}\label{lem:subdiffdist}
Let $X \subset \RR^n$ be nonempty and closed, and let $P_X(x) := \{y \in X : \lVert x -y\rVert = \text{dist}_X(x)\}$. Then the subdifferential of the dist function ís given by
$$\partial \text{dist}_X(x) = \begin{cases}
	N^{\lim}_X(x) \cap B_1(0), \quad &x\in X\\
	\frac{x-P_X(x)}{\text{dist}_X(x)}, \quad &x\notin X.
\end{cases}$$
\end{lem}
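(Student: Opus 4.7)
The plan is to treat the two cases $x \in X$ and $x \notin X$ separately. In each case, we first characterize the Fr\'echet subdifferential of $\text{dist}_X$ and then obtain the limiting subdifferential via the outer limit in its definition, exploiting throughout that $\text{dist}_X$ is globally $1$-Lipschitz.

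For $x \in X$ we observe that $\text{dist}_X(x) = 0$ while $\text{dist}_X \ge 0$ everywhere. Any Fr\'echet subgradient $v$ therefore satisfies $\text{dist}_X(y) \ge v^T(y - x) + o(\|y - x\|)$, and evaluating along sequences $y \to x$ with $y \in X$ immediately yields $v \in N_X^F(x)$; the Lipschitz property forces $\|v\| \le 1$. For the converse inclusion we would verify the Fr\'echet subgradient inequality directly for any $v \in N_X^F(x) \cap B_1(0)$ by bounding $\text{dist}_X(y)$ from below via a projection of $y$ onto $X$. Passing to the outer limit as $\bar x \to x$ then produces $\partial \text{dist}_X(x) = N_X^{\lim}(x) \cap B_1(0)$, the ball constraint being preserved by closedness.

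For $x \notin X$ the key observation is that for each $p \in P_X(x)$ the smooth majorant $y \mapsto \|y - p\|$ coincides with $\text{dist}_X$ at $x$ and dominates it nearby, so by the proximal subgradient inequality its gradient $(x - p)/\text{dist}_X(x)$ belongs to $\partial \text{dist}_X(x)$. For the reverse inclusion we would use the fact that on the dense set of points $\bar x \notin X$ with a single-valued projection, $\text{dist}_X$ is continuously differentiable with gradient $(\bar x - P_X(\bar x))/\text{dist}_X(\bar x)$; a sequential limit argument then forces every element of $\partial \text{dist}_X(x)$ to arise from some projection point in $P_X(x)$.

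The main obstacle we anticipate is the reverse direction when $P_X(x)$ is multi-valued: one has to construct approximating sequences $\bar x^k \to x$ with single-valued $P_X(\bar x^k)$ whose accumulation points realize the prescribed elements of $P_X(x)$, and then match the limits of the corresponding gradients with the claimed subdifferential. This is a standard variational-analytic construction, and full details can be found, for instance, in \cite[Example 8.53]{Rockafellar1998}.
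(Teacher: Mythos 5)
The paper's own ``proof'' of this lemma is nothing more than the citation \cite[Ex.\ 8.53]{Rockafellar1998}, and since you ultimately defer the details to the same reference, your proposal is aligned with the paper at that level. However, the sketch you give for the case $x \notin X$ rests on a step that would fail if fleshed out: for $p \in P_X(x)$, the smooth function $y \mapsto \lVert y - p\rVert$ is a \emph{majorant} of $\text{dist}_X$ touching it at $x$, and a smooth majorant touching from above produces a Fr\'echet \emph{super}gradient, not a subgradient — the proximal subgradient inequality runs in the opposite direction. Indeed, when $P_X(x)$ is multivalued the Fr\'echet subdifferential of $\text{dist}_X$ at $x$ is empty (take $X = \{-1,1\} \subset \RR$ and $x = 0$, where $\text{dist}_X(y) = 1 - |y|$ near $0$), so no argument carried out at the point $x$ itself can place $(x-p)/\text{dist}_X(x)$ in the Fr\'echet subdifferential there. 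The standard route to the inclusion $\{(x-p)/\text{dist}_X(x) : p \in P_X(x)\} \subset \partial\, \text{dist}_X(x)$ is precisely the approximation you reserve for the reverse direction: at the points $x_t = p + t(x-p)$, $t \in (0,1)$, the projection is single-valued (equal to $p$) and $\text{dist}_X$ is differentiable with gradient $(x-p)/\lVert x-p\rVert$, so letting $t \to 1$ exhibits $(x-p)/\text{dist}_X(x)$ as a limit of Fr\'echet subgradients at nearby points, hence as a limiting subgradient at $x$.

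Two smaller inaccuracies: on the set of points with single-valued projection, $\text{dist}_X$ is (Fr\'echet) differentiable but not in general continuously differentiable — plain differentiability is all the outer-limit argument needs; and in the case $x \in X$, the outer limit defining $\partial\,\text{dist}_X(x)$ also receives contributions from points $\bar x \notin X$, whose unit-norm Fr\'echet subgradients $(\bar x - P_X(\bar x))/\text{dist}_X(\bar x)$ must be shown to land in $N_X^{\lim}(x)$ (they do, being limits of proximal normals at projection points converging to $x$); the remark that ``the ball constraint is preserved by closedness'' does not address this. Your sequential argument for the reverse inclusion at $x \notin X$, by contrast, is sound: nonemptiness of the Fr\'echet subdifferential at a point outside $X$ forces differentiability and uniqueness of the projection there, and the limit of the projections realizes the required element of $P_X(x)$.
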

\begin{proof}
	See \cite[Ex. 8.53]{Rockafellar1998}.
\end{proof}

With the necessary analysis in place, we now formulate the notion of (M-)stationarity as found in literature, cf.\ \cite{Liang2021}

\begin{defn}
Consider the problem 
$$
   P =\min\ f(x) \st g(x)\in X ,
$$
where $f:\mathbb{R}^n \to \mathbb{R}$ is locally Lipschitz, $g:\mathbb{R}^n \to \mathbb{R}^m$ a $C^1$ function and $X\subset \mathbb{R}^m$ a nonempty and closed set. We call $x^*$ a \emph{stationary point} of $P$ if $g(x^*) \in X$ and there exists $\lambda \in N_X^{\lim}(g(x^*))$ such that
$$
   0 \in \partial f(x^*) + g'(x^*)^T \lambda.
$$
\end{defn}

The result of Clarke together with the sum rule from 
Lemma~\ref{lem:calcsub} $(ii)$ implies that a local minizer $x^*$ of $P$ 
satisfies
$$
   0 \in \partial f(x^*) + N_{g^{-1}(X)}^{\lim}(x^*),
$$
which, in general, is an impractical criterion to check. Under metric subregularity, however, we arrive at the necessary optimality condition \eqref{eqn:statdist}, where Lemma \ref{lem:subdiffdist} together with Lemma \ref{lem:calcsub} $(v)$ infers
$$
   0 \in \partial f(x^*) + g'(x^*)^T\lambda, \quad \lambda \in N^{\lim}_X(g(x^*))
$$
at a local minimizer $x^*$. Metric subregurality is therefore not only a criterion for the existence of an exact penalty function, but can also be considered a constraint qualification for a more general class of constraints.

\subsection{Reformulation of Sparse Optimization Problems}

Here, we recall the class of reformulations of problem \eqref{SPO}
from our paper \cite{KanzowWeiß2023}. To this end, $p^{\rho} \colon \RR^n \to \RR $ be a function (usually depending on a parameter $ \rho > 0 $)
given by
\begin{equation}\label{Eq:p-rho}
	p^\rho(y) = \sum_{i=1}^n p^{\rho}_i(y_i)
\end{equation}
with each $ p^{\rho}_i \colon \RR \to \RR $ being such that it satisfies the following conditions:
\begin{itemize}
	\item[(P.1)] $ p^\rho_i$ is convex and attains a unique minimum
	(possibly depending on $ \rho $) at some point $ s_i^\rho > 0 $;
	\item[(P.2)] $p^\rho_i(0) - p^\rho_i(s_i^\rho) = \rho$;
	\item[(P.3)] $p^\rho_i$ is continuously differentiable.
\end{itemize}
Assumption (P.1) simply states that $ p^\rho_i $ is a convex function
which attains its unique minimum in the open interval $ (0, \infty) $. We denote this minimum by $ s_i^\rho $. Furthermore, we write
\begin{equation}\label{Eq:prho-Min}
	m_i^{\rho} := p_i^{\rho} (s_i^{\rho}) \quad \text{and} \quad 
	M^{\rho} := \sum_{i=1}^n m_i^{\rho}
\end{equation}
for the corresponding minimal function values of $ p_i^{\rho} $ and 
$ p^{\rho} $, respectively. Condition (P.2) corresponds to a 
suitable scaling of the function $ p_i^{\rho} $ that can always be guaranteed to hold by a suitable multiplication of $ p_i^{\rho} $.
Finally, condition (P.3) is a smoothness condition. We stress that some
of our results hold under the weaker condition that $ p_i^{\rho} $
is only continuous like, e.g., the subsequent statements of the
equivalence between global and local minima, but the observations
regarding stationary points and the practical solution of the exact
penalty subproblems require the continuous differentiability of 
the mappings $ p_i^{\rho} $.

We also recall some examples of function $ \rho_i^{\rho} $ satisfying
properties (P.1)-(P.3), cf.\ \cite{KanzowWeiß2023} for more details

\begin{example}\label{Ex:pirho}
The following functions $ p_i^{\rho} \colon \RR \to \RR $ satisfy
properties (P.1)-(P.3):
\begin{itemize}
	\item[(a)] $ p_i^{\rho}(y_i) := \rho y_i ( y_i - 2) $;
	\item[(b)] $ p_i^{\rho}(y_i) := \frac{1}{2} (y_i - \sqrt{2 \rho})^2 $;
	\item[(c)] A suitable Huber-type modification of the shifted 
	absolute-value function $ p_i^{\rho} (y_i):= \rho | y_i - 1 | $ can also
	be constructed to satisfy (P.1)-(P.3). (Note that $ p_i^{\rho} $
	itself is nonsmooth and therefore violates (P.3).)
\end{itemize}
\end{example}

In the following, we assume that $ p^{\rho} $ is given by \eqref{Eq:p-rho}
with each term $ p_i^{\rho} $ satisfying conditions (P.1)-(P.3) (though
only continuity is required instead of (P.3) within this subsection). We
then introduce the reformulation
\begin{equation}
   \min_{x,y} f(x) + p^\rho(y), \st g(x)\in X, \ x \circ y = 0, \ y\ge0. \label{SPOref} \tag{SPOref}
\end{equation}
of problem \eqref{SPO}. The following results from \cite{KanzowWeiß2023}
show that \eqref{SPOref} is indeed a reformulation of the given
sparse optimization problem \eqref{SPO}.

\begin{lem}\label{0normprop}
Let $ p^\rho $ be given by \eqref{Eq:p-rho} with each $ p_i^\rho $ satisfying
properties (P.1)--(P.3), and let $ M^{\rho} $ be defined by
\eqref{Eq:prho-Min}. Then the following statements hold:
\begin{itemize}
	\item[(a)] The inequality $ \rho \norm{x}_0 \le p^\rho(y) - M^\rho $
	   holds for any feasible point $ (x,y) $ of \eqref{SPOref}.
	\item[(b)] Equality $ \rho \norm{x}_0 = p^\rho(y) - M^\rho $
	  holds for a feasible point $ (x,y) $ of \eqref{SPOref} if and only if $y_i = s_i^\rho$ for all $i \in I_0(x)$.
	\item[(c)] If $(x^*,y^*)$ is a local minimum of \eqref{SPOref}, we have $y_i^* = s_i^\rho$ for all $i \in I_0(x^*)$.
\end{itemize}
\end{lem}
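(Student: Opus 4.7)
The plan is to exploit the complementarity constraint and properties (P.1)--(P.2) in a coordinatewise fashion. For a feasible point $(x,y)$ of \eqref{SPOref}, I partition the index set into $I_0(x)$ and its complement: if $x_i \neq 0$ then $x_i y_i = 0$ together with $y_i \geq 0$ forces $y_i = 0$, so (P.2) gives $p_i^\rho(y_i) - m_i^\rho = p_i^\rho(0) - p_i^\rho(s_i^\rho) = \rho$. For $i \in I_0(x)$, only $y_i \geq 0$ is required, and (P.1) yields $p_i^\rho(y_i) - m_i^\rho \geq 0$.

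For part (a), I just add these two estimates coordinatewise:
\begin{equation*}
   p^\rho(y) - M^\rho = \sum_{i \notin I_0(x)} \big(p_i^\rho(0) - m_i^\rho\big) + \sum_{i \in I_0(x)} \big(p_i^\rho(y_i) - m_i^\rho\big) \geq \rho \cdot |\{i : x_i \neq 0\}| = \rho \|x\|_0.
\end{equation*}
For part (b), I inspect the inequality chain of (a): the only slack comes from the $I_0(x)$ sum, so equality is equivalent to $p_i^\rho(y_i) = m_i^\rho$ for all $i \in I_0(x)$. Since $s_i^\rho$ is the \emph{unique} minimizer of $p_i^\rho$ by (P.1), this is in turn equivalent to $y_i = s_i^\rho$ for all $i \in I_0(x)$.

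For part (c), I argue by contradiction using a single-coordinate perturbation. Suppose $(x^*,y^*)$ is a local minimizer and there is some $j \in I_0(x^*)$ with $y_j^* \neq s_j^\rho$. For $t \in [0,1]$ define $y(t)$ by $y_j(t) := y_j^* + t(s_j^\rho - y_j^*)$ and $y_i(t) := y_i^*$ for $i \neq j$. Since $y_j^*, s_j^\rho \geq 0$, convexity of the nonnegative reals yields $y(t) \geq 0$, and $x_j^* = 0$ preserves the complementarity condition $x^* \circ y(t) = 0$; thus $(x^*, y(t))$ is feasible for every $t \in [0,1]$. Only the $j$-th term of $p^\rho$ changes, and convexity of $p_j^\rho$ combined with the strict inequality $p_j^\rho(s_j^\rho) < p_j^\rho(y_j^*)$ (strict by uniqueness of the minimizer in (P.1)) gives $p_j^\rho(y_j(t)) \leq (1-t) p_j^\rho(y_j^*) + t \, p_j^\rho(s_j^\rho) < p_j^\rho(y_j^*)$ for every $t \in (0,1]$. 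Letting $t \searrow 0$ produces feasible points arbitrarily close to $(x^*, y^*)$ with strictly smaller objective value, contradicting local minimality.

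The only mildly delicate step is part (c), where one has to be careful that the chosen perturbation stays feasible: the complementarity constraint is nonconvex, but because we only move the $j$-th coordinate of $y$ with $j \in I_0(x^*)$, the constraint $x^* \circ y(t) = 0$ is trivially preserved. The rest is straightforward bookkeeping.
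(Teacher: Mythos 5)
Your proof is correct. Note that the paper itself does not prove this lemma but cites it from \cite{KanzowWeiß2023}; your argument is the natural self-contained one and matches what that reference does in spirit: the complementarity constraint forces $y_i = 0$ whenever $x_i \neq 0$, so (P.2) contributes exactly $\rho$ per nonzero component, while (P.1) handles the indices in $I_0(x)$, giving (a) and, via uniqueness of the minimizer $s_i^\rho$, also (b). Your part (c) is also sound: perturbing only the $j$-th coordinate of $y$ with $j \in I_0(x^*)$ trivially preserves feasibility of the nonconvex complementarity constraint, and convexity of $p_j^\rho$ guarantees a strict decrease of the objective along the whole segment toward $s_j^\rho$, which is exactly what is needed to contradict local minimality with points arbitrarily close to $(x^*,y^*)$.
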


Part (c) of the previous result is of particular interest since it tells 
us that, at any local minimum $(x^*,y^*)$, the \emph{biactive set} $\{ i : x_i^*=y_i^* = 0\}$ is empty, and that the vector $ y^* $ corresponding
to $ x^* $ is necessarily given by
\begin{equation}\label{Eq:y-star}
	y^*_i = \begin{cases} s_i^\rho, & \textrm{for } i \in I_0(x^*), \\ 
	0, & \text{otherwise}.
    \end{cases}
\end{equation}

We now state the essential relation between \eqref{SPO} and \eqref{SPOref} also found, see, once again, \cite{KanzowWeiß2023} for the corresponding
proof.

\begin{thm}[Equivalence of Global Minima]\label{Thm:EquivGlobalMinima}
A feasible $x^*$ for \eqref{SPO} is a global minimum of \eqref{SPO} if and
only if $(x^*,y^*)$ with $ y^* $ given by \eqref{Eq:y-star} is a global
minimum of \eqref{SPOref}. 
\end{thm}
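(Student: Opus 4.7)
The plan is to prove both implications using Lemma \ref{0normprop} as the central engine, reducing everything to the identity
\[
   p^\rho(y^*) = \rho \|x^*\|_0 + M^\rho,
\]
which is precisely the equality case (b) of Lemma \ref{0normprop} applied to the specific $y^*$ defined by \eqref{Eq:y-star}. I would begin by verifying that $(x^*, y^*)$ is feasible for \eqref{SPOref} whenever $x^*$ is feasible for \eqref{SPO}: the constraint $g(x^*) \in X$ is inherited directly, the nonnegativity $y^* \ge 0$ follows because $s_i^\rho > 0$ by (P.1), and the complementarity $x^* \circ y^* = 0$ holds since $y^*_i \neq 0$ forces $i \in I_0(x^*)$, i.e.\ $x^*_i = 0$.

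For the forward direction, assume $x^*$ is a global minimum of \eqref{SPO}. For any feasible $(x, y)$ of \eqref{SPOref}, the component $x$ is feasible for \eqref{SPO} because $g(x) \in X$, so applying Lemma \ref{0normprop}(a) and the optimality of $x^*$ gives
\[
   f(x) + p^\rho(y) \,\ge\, f(x) + \rho \|x\|_0 + M^\rho \,\ge\, f(x^*) + \rho \|x^*\|_0 + M^\rho.
\]
By construction of $y^*$, Lemma \ref{0normprop}(b) turns the last quantity into $f(x^*) + p^\rho(y^*)$, proving global optimality of $(x^*, y^*)$ in \eqref{SPOref}.

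For the reverse direction, assume $(x^*, y^*)$ is a global minimum of \eqref{SPOref}. Clearly $x^*$ is feasible for \eqref{SPO}. For any other feasible $\tilde x$ of \eqref{SPO}, I would construct the associated $\tilde y$ via the same recipe \eqref{Eq:y-star}, verify feasibility of $(\tilde x, \tilde y)$ in \eqref{SPOref} exactly as above, and use the optimality of $(x^*, y^*)$ together with two applications of Lemma \ref{0normprop}(b) (once at $(x^*, y^*)$, once at $(\tilde x, \tilde y)$) to cancel the common term $M^\rho$ and obtain
\[
   f(x^*) + \rho \|x^*\|_0 \,\le\, f(\tilde x) + \rho \|\tilde x\|_0.
\]

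There is really no serious obstacle here: the entire content of the theorem is packaged inside Lemma \ref{0normprop}, and the only subtle point is remembering that in the reverse direction one cannot invoke the minimum of $p^\rho(y)$ over arbitrary $y \ge 0$ (which would give $M^\rho$ but ignore the complementarity), but must instead build the specific competitor $(\tilde x, \tilde y)$ that realizes equality in Lemma \ref{0normprop}(a). This use of the tight competitor, rather than a loose lower bound, is what makes the equivalence two-sided.
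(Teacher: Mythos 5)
Your argument is correct: feasibility of the lifted point, part (a) of Lemma~\ref{0normprop} for the lower bound, and part (b) for the equality at points built via \eqref{Eq:y-star} are exactly the ingredients needed, and both directions go through as you describe, including the key observation that the reverse direction requires the tight competitor $(\tilde x,\tilde y)$ rather than minimizing $p^\rho$ over all $y\ge 0$. The paper itself does not reproduce a proof but defers to \cite{KanzowWeiß2023}, where the statement is established by essentially this same reduction to Lemma~\ref{0normprop}, so your proposal matches the intended route.
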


The corresponding results for local minima also holds (recall that
this is not true, in general, for a similar reformulation
of cardinality-constrained problems, cf.\ \cite{BKS-2016}).

\begin{thm}[Equivalence of Local Minima]\label{Thm:EquivLocalMinima}
A feasible $x^*$ for \eqref{SPO} is a local minimum of \eqref{SPO} if and
only if $(x^*,y^*)$ with $ y^* $ given by \eqref{Eq:y-star} is a local
minimum of \eqref{SPOref}. 
\end{thm}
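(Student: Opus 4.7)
The plan is to prove both implications via Lemma~\ref{0normprop}: part~(a) yields $\rho\|x\|_0 \le p^\rho(y) - M^\rho$ at every feasible point $(x,y)$ of \eqref{SPOref}, while part~(b) shows that this inequality is an equality at $(x^*,y^*)$ since, by construction, $y^*_i = s_i^\rho$ for all $i \in I_0(x^*)$. In particular, $p^\rho(y^*) = \rho\|x^*\|_0 + M^\rho$, and feasibility of $(x^*,y^*)$ for \eqref{SPOref} is immediate. Continuity of $f$ and the discrete jump of size $\rho$ in $\rho\|\cdot\|_0$ will be the two ingredients that then close the argument.

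For the direction $(\Rightarrow)$, I would pick any feasible $(x,y)$ of \eqref{SPOref} with $(x,y)$ close to $(x^*,y^*)$. Then $g(x)\in X$ and $\|x-x^*\|$ is small, so $x$ is feasible for \eqref{SPO} and, by the assumed local minimality of $x^*$, satisfies $f(x^*)+\rho\|x^*\|_0 \le f(x)+\rho\|x\|_0$. Combining this with the two facts just recalled yields
\[
f(x^*)+p^\rho(y^*) = f(x^*)+\rho\|x^*\|_0 + M^\rho \le f(x)+\rho\|x\|_0 + M^\rho \le f(x)+p^\rho(y),
\]
so $(x^*,y^*)$ is a local minimum of \eqref{SPOref}.

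For the converse $(\Leftarrow)$, let $x$ be feasible for \eqref{SPO} with $\|x-x^*\|$ small, and associate the vector $y$ defined by $y_i := s_i^\rho$ for $i\in I_0(x)$ and $y_i := 0$ otherwise, so that $(x,y)$ is feasible for \eqref{SPOref} and $p^\rho(y) = \rho\|x\|_0 + M^\rho$ by Lemma~\ref{0normprop}(b). Continuity forces $I_0(x)\subseteq I_0(x^*)$ whenever $\|x-x^*\|$ is small enough, and I would then split into two cases. If $I_0(x) = I_0(x^*)$, then $y = y^*$; hence $(x,y)$ is close to $(x^*,y^*)$, and the assumed local minimality of $(x^*,y^*)$ directly translates into $f(x^*)+\rho\|x^*\|_0 \le f(x)+\rho\|x\|_0$. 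If $I_0(x)\subsetneq I_0(x^*)$, then $\|x\|_0 \ge \|x^*\|_0 + 1$, and continuity of $f$ at $x^*$ (shrink the neighborhood further so that $|f(x)-f(x^*)| < \rho$) ensures that the discrete jump in $\rho\|\cdot\|_0$ outweighs the deviation of $f$, giving the same inequality even strictly.

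The main obstacle is precisely this second case in the converse: the natural choice of $y$ fails to converge to $y^*$ whenever a component of $x^*$ that was zero flips to nonzero in $x$, so local minimality of $(x^*,y^*)$ in \eqref{SPOref} cannot be invoked on $(x,y)$ directly. It is here that $\rho>0$ is genuinely exploited, and it is exactly this mechanism that distinguishes the $\ell_0$-regularized setting from the cardinality-constrained reformulation in \cite{BKS-2016}, where the corresponding equivalence of local minima fails.
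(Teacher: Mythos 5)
Your proof is correct: both directions rest on Lemma~\ref{0normprop}(a),(b) exactly as intended, and in the converse you correctly isolate and handle the only delicate case—when a zero component of $x^*$ becomes nonzero, so the associated $y$ is far from $y^*$ and local minimality of $(x^*,y^*)$ cannot be invoked—by combining continuity of $f$ with the jump of size $\rho$ in the $\ell_0$-term. The paper itself gives no proof of Theorem~\ref{Thm:EquivLocalMinima} (it defers to \cite{KanzowWeiß2023}), and your argument is precisely the standard one behind that reference, so nothing further is needed.
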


The previous two results imply that the sparse optimization
problem \eqref{SPO} and its reformulation \eqref{SPOref} are completely
equivalent programs (at least in terms of local and global
minima). This motivates to solve the given nonsmooth (even
discontinuous) minimization problem \eqref{SPO} by solving the
equivalent smooth (continuously differentiable) program \eqref{SPOref}.
Of course, the latter has its difficulties, too, which arises from
the complementarity constraints. Thus, its solution requires 
suitable problem-tailored techniques. On the other hand, we already
stress at this point that the biactive set of the reformulated
program is empty, which helps a lot to prove suitable properties.

\subsection{The Tightened Nonlinear Program}

The objectve function of \eqref{SPO} is, due to the $\ell_0$-norm, 
not locally Lipschitz, which takes away access from standard NLP theory. 
To remedy this effect, we first consider the 
\emph{tightened nonlinear program}
\begin{equation}
	\min f(x) \st g(x)\in X, \ x_i = 0 \ \forall i\in I_0(x^*) \tag{TNLP$(x^*)$} \label{TNLP}
\end{equation}
around a feasible point $x^*$. This tightened problem plays a fundamental
role for the theoretical investigation of sparse optimization problems
(not from a practical point of view since the given $ x^* $ is usually
a minimum and therefore unknown). For example, most (problem-tailored)
constraint qualifications for the sparse nonlinear program
\eqref{SPO} are typically defined by the corresponding (standard)
constraint qualification for \eqref{TNLP}, cf.\ our previous
works \cite{KanzowSchwartzWeiss2022,KanzowWeiß2023} for some examples.
Here, we will exploit this tightened program in a similar context.

First of all observe that $x^*$ is a local minimum of \eqref{SPO} if and only if $x^*$ solves the corresponding \ref{TNLP}. This, in particular,
motivates to define stationary points also via the tightened program.

\begin{defn}
Let $x^*$ be feasible for \eqref{SPO}. We say that $x^*$ is 
\emph{stationary} for \eqref{SPO} if $x^*$ is stationary for \ref{TNLP}.
\end{defn}

If $f$ is differentiable, this stationarity conditoins collapses to 
$$
   0 = \nabla f(x) + \sum_{I_0(x^*)}\gamma_ie_i + g'(x)^T\lambda, \quad \gamma\in\mathbb{R}^{|I_0(x^*)|}, \ \lambda \in N_X^{\lim}(g(x)).
$$
In turn, at $x=x^*$, this is equivalent to
\begin{equation*}
   0 \in \nabla f(x^*) + \partial \rho \lVert x^*\rVert_0 + g'(x^*)^T \lambda, \quad \lambda \in N_X^{\lim}(g(x^*)).
\end{equation*}
This is \emph{precisely} the (M-)stationarity extended to the case of \eqref{SPO}. Any known result from standard NLP theory, like formulating suitable constraint qualifications and also the existence of exact penalty functions, can now be extended to \eqref{SPO} by means of the corresponding \ref{TNLP} at the point $x^*$.

Let us introduce that \emph{filter matrix}
\begin{equation}\label{Eq:Pmatrix}
	P \in \RR^{|I_0(x^*)| \times n} \text{ whose columns are
	given by } e_i^T \ (i \in I_0(x^*))
\end{equation}
at a feasible point $ x^* $. Note that $ P $ depends on the given point
$ x^* $, though this is not made precise by our notation since the 
underlying vector $ x^* $ will always be clear from the context.
Using this matrix $ P $, the feasible set of \eqref{TNLP} can be
rewritten as
\begin{equation*}
	C := \{x \ : \ Px = 0, \ g(x) \in X \} ,
\end{equation*}
which clarifies that $ P $ is a kind of (sparse) filter of $ x^* $.
Further writing 
\begin{equation*}
	S := g^{-1} (X),
\end{equation*}
the feasible set $ C $ gets the representation 
\begin{equation*}
   C = \{ x \ : \ Px = 0 , x \in S \},
\end{equation*}
so that $ C $ is an intersection of a linear constraint and an abstract
(geometric) constraint. Theorem~\ref{thm:distexact} now tells us that the dist function is already exact. However, we want to keep $S$ as a constraint and penalize only the sparsity inducing part $Px = 0$. The question, whether this leads to an exact penalty formulation, is answered by the well kown error bound property from \cite{Gfrerer2013}, specifically formulated for the \eqref{TNLP}.

\begin{defn}
We say that the \emph{error bound property} for \eqref{TNLP} holds at 
$x^* \in C$ if there exist $\varepsilon>0$ and $\mu > 0$ satisfying
$$
   \text{dist}_C(x) \le \mu \lVert Px\rVert
$$
for all $x \in g^{-1}(X) \cap B_\varepsilon(x^*)$, where $ P $
denotes the matrix from \eqref{Eq:Pmatrix}.
\end{defn}

One can easily check that metric subregularity for the specific structure 
of $C$ already implies the error bound property, so that we will be 
content with the usual constraint qualifications formulated for $C$ 
which imply metric subregularity. Consequently, any conditions 
stronger than the error bound property yields that the penalty function
\begin{equation}
	f(x) + \alpha\norm{Px}_1, \label{PenSPOalpha}
\end{equation}
is exact for \eqref{TNLP}. So far, extensive research has been put into constraint qualifications that imply error boundedness or are equivalent to the existence of error bounds. For example, \cite{MiT-2011} shows
that a quasi-normality conditions is sufficient for an error bound.
The very recent report \cite{andreani2025primal} generalizes this
result and shows that a slightly weaker version of quasi-normality,
called directional quasi-normality,
is actually equivalent to an error bound condition. Note, however, that
the results are presented for functional constraints only.

Here, we view the feasibility condition $ g(x) \in X $ as an abstract constraint and only penalize the seemingly difficult constraints. This will
be done based on a generalized quasi-normality conditions which, in
particular, holds under the more standard generalized MFCQ assumption
to be introduced in the following section.

\section{Exact Penalty Approach for the Sparse Optimization Problem} \label{section:penalty}

We recall from Theorems~\ref{Thm:EquivGlobalMinima} and \ref{Thm:EquivLocalMinima} that the given sparse optimization
problem \eqref{SPO} is completely equivalent to the reformulated
program \eqref{SPOref} both in terms of global and local minima.
In this section, we exploit this equivalence and consider an 
exact penalty approach based on the structure of the reformulated
program \eqref{SPOref}. More precisely, we consider the penalty problem
\begin{equation} \tag{Pen($\alpha$)} \label{Penalpha}
    \min_{x,y} f(x) + p^\rho(y) + \alpha |x|^Ty, \st g(x) \in X, \ y \ge 0,
\end{equation}
where
$$
   |x| := (|x_1|, |x_2|, ..., |x_n|), 
$$
i.e.\ only penalize the complementarity constraints $ x \circ y $ 
from the reformulated program \eqref{SPOref}, whereas we keep the
(seemingly simple) remaining constraints. Note that, and in contrast
to the exact penalty approach from our previous paper \cite{KanzowWeiß2023},
we do not necessarily have that the components of the vector $ x $
are nonnegative, and this is the reason why we need to take the 
absolute value of $ x $ within the penalized objective function.
This, of course, makes this objective function nonsmooth.

To derive stationary conditions for the penalized program
\eqref{Penalpha}, we first present a preliminary result which computes the subdifferential of the nonsmooth term $q(x,y) = |x|^Ty$.

\begin{lem}
At any point $(x,y)$, where $y \ge 0$, the subdifferential of $q(x,y) := |x|^T y$ is given by
\[v \in \partial q(x,y) \quad \Longleftrightarrow \quad v = \begin{pmatrix}
    y \circ s \\ |x|
\end{pmatrix}, \quad s_i \in \begin{cases}
    \{1\}, \quad &x_i > 0,\\
    [-1, 1], \quad &x_i = 0,\\
    \{-1\}, \quad &x_i < 0,
\end{cases} \quad \forall i = 1,...,n.\]
\end{lem}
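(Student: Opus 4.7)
The plan is to apply the nonsmooth chain rule from Lemma~\ref{lem:calcsub}(iv). Write $q = f \circ \varphi$ with the $C^1$ outer map $f(u,v) := u^T v$ and the locally Lipschitz inner map $\varphi(x,y) := (|x|, y)$. Since $\nabla f(u,v) = (v, u)$, evaluating at the reference point yields $\nabla f(|x|, y) = (y, |x|)$, so rule (iv) reduces the task to computing
$$
\partial q(x,y) \;=\; \partial_{(\xi,\eta)}\!\Bigl( y^T |\xi| + |x|^T \eta \Bigr)\Big|_{(\xi,\eta) = (x,y)},
$$
where $(x,y)$ are frozen constants and $(\xi, \eta)$ are the live variables.

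The term $|x|^T \eta$ depends only on $\eta$ and is linear, so by the exact sum rule from Lemma~\ref{lem:calcsub}(iii) it contributes the gradient $|x|$ in the $\eta$-block and nothing in the $\xi$-block. The term $y^T |\xi| = \sum_{i=1}^n y_i |\xi_i|$ depends only on $\xi$ and is separable, so Lemma~\ref{lem:calcsub}(i) gives
$$
\partial_\xi \bigl( y^T |\xi| \bigr)(x) \;=\; \partial\bigl(y_1 |\cdot|\bigr)(x_1) \times \cdots \times \partial\bigl(y_n |\cdot|\bigr)(x_n).
$$
Here the hypothesis $y \geq 0$ enters crucially: because $y_i \geq 0$, the scalar function $t \mapsto y_i|t|$ is convex and positively homogeneous, so its subdifferential at $t = x_i$ equals $y_i \cdot \partial|\cdot|(x_i)$, which reduces to $\{y_i\}$, $\{-y_i\}$, or $[-y_i, y_i]$ according to the sign of $x_i$.

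Parametrizing by $s_i \in \partial|\cdot|(x_i)$ exactly as in the statement, the $i$-th coordinate of the $\xi$-block is $y_i s_i$; assembling across $i$ gives the $\xi$-block $y \circ s$, and combining with the $\eta$-block $|x|$ yields the announced formula. The one delicate point I anticipate is the sign hypothesis on $y$: for negative $y_i$ the identity $\partial(y_i|\cdot|)(0) = y_i \, \partial|\cdot|(0)$ would fail (the set would be reflected rather than positively scaled), so the clean parametrization of subgradients by $s$ genuinely relies on $y \geq 0$. Beyond this observation, the argument is direct bookkeeping via Lemma~\ref{lem:calcsub}.
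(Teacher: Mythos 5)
Your proof is correct, but it follows a slightly different route than the paper's. The paper argues directly on the two blocks: it observes that $q$ is continuously differentiable in $y$ with partial derivative $|x|$, takes this as the (unique) second block of the subdifferential, and then handles the $x$-block by separability of $\sum_i |x_i|y_i$ together with the fact that the limiting subdifferential of the convex function $t \mapsto y_i|t|$ (here $y\ge 0$ is used) is the convex-analysis subdifferential. You instead invoke the scalarization chain rule of Lemma~\ref{lem:calcsub}(iv) with outer map $f(u,v)=u^Tv$ and inner map $\varphi(x,y)=(|x|,y)$, which reduces the computation, with equality, to the fully separable frozen function $(\xi,\eta)\mapsto y^T|\xi| + |x|^T\eta$, after which (i)/(iii) and convexity finish the job. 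What your route buys is a cleaner justification of how the \emph{joint} limiting subdifferential splits into the two blocks: partial smoothness in $y$ alone does not in general determine a block of the joint subdifferential, and the scalarization step makes this decomposition rigorous rather than implicit. One small side remark: your parenthetical about $y_i<0$ is right in spirit but not in detail — for $y_i<0$ the limiting subdifferential of $y_i|\cdot|$ at $0$ is the two-point set $\{y_i,-y_i\}$ (the function is concave there), not a reflected interval; this is immaterial to the proof since $y\ge 0$ is assumed.
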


\begin{proof}
First observe that the function $ q $ is continuously differentiable
with respect to $ y $ with the corresponding partial derivative
given by $ | x | $, which then is equal to the unique second block
component of the subdifferential of $ q $. Regarding the subdifferential
with respect to the variable $ x $, we first note that the function
$ q $ is separable and can be rewritten as
\begin{equation*}
	q(x,y) = \sum_{i=1}^n | x_i | y_i .
\end{equation*}
Application of the subdifferential properties from 
Lemma~\ref{lem:calcsub} and noting that the limiting subdifferential
of a convex function coincides with the standard subdifferential
from convex analysis for a convex function, we obtain the desired statement 
\end{proof}

By application of the sum rule from Lemma~\ref{lem:calcsub} $(iii)$, the limiting subdifferential of the objective function from the penalty problem \eqref{Penalpha} simply reads
\[
	\partial \big( f(x) + p^\rho(y) + \alpha |x|^T y \big) = 
	\begin{pmatrix}
    \nabla f(x) + \alpha y \circ \partial(|x|) \\
    \nabla p^\rho(y) + \alpha |x|,
	\end{pmatrix}
\]
and is locally bounded by continuity of $\nabla f$ and $\nabla p^\rho$ and boundedness of $\partial(|x|)$, which implies local Lipschitz continuity. Hence, any local minimizer $(x,y)$ necessarily satisfies
\[ 0 \in \begin{pmatrix}
    \nabla f(x) + \alpha y \circ \partial(|x|) \\
    \nabla p^\rho(y) + \alpha |x|
\end{pmatrix} + N_{g^{-1}(X)}^{\lim}(x) + N_{\ge 0}(y) \]
where $N_{\ge 0} (y) = \{\lambda \ : \ \lambda^T(z - y) \le 0, \ \forall z \ge 0\} $ denotes the convex normal cone to $\RR_{\ge 0}$. From standard analysis, one has
$$
	\gamma \in N_{\ge 0}(y) \quad \Longleftrightarrow \quad \gamma_i \le 0, \ \gamma_iy_i = 0.
$$
With this in mind, simply writing down that stationarity
condition, especially for \eqref{Penalpha}, leads to the following definition.

\begin{defn}
We call $(x^*,y^*)$ \emph{stationary} for \eqref{Penalpha} if $(x^*,y^*)$ 
is feasible and there are multipliers $\lambda \in N_X^{\lim}(g(x^*))$ and  $\gamma_i\ge0 \ \big(i \in I_0(y^*)\big)$ with
\begin{align} \begin{split} 0 &\in \begin{pmatrix}
	\nabla f(x^*) + \alpha y^* \circ \partial(|x^*|) + g'(x^*)^T\lambda \\
	\nabla p^\rho(y^*) + \alpha |x^*| - \sum_{i \in I_0(y^*)} \gamma_i e_i
	\end{pmatrix}.\label{PenStat}\end{split}
\end{align}
\end{defn}

The following result essentially shows that \eqref{PenSPOalpha} is an exact penalty function of the tightend program \eqref{TNLP} if and only if \eqref{Penalpha} is an exact penalty function of reformulated program \eqref{SPOref}.

\begin{thm}
Let $x^*$ be a local minimum of \eqref{SPO} or, equivalently, $(x^*,y^*)$ be a local minimum of \eqref{SPOref} with $ y^* $ being given by
\eqref{Eq:y-star}. Then \eqref{PenSPOalpha} is an exact penalty function of \eqref{TNLP} at $x^*$ if and only if \eqref{Penalpha} is an exact penalty function of \eqref{SPOref} at $(x^*,y^*).$
\end{thm}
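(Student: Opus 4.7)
My plan is to exploit the explicit form of $y^*$ from \eqref{Eq:y-star}---$y_i^* = s_i^\rho > 0$ for $i \in I_0(x^*)$ and $y_i^* = 0$ otherwise---to translate between the two penalty functions by making suitable choices of the auxiliary variable $y$. I first fix a neighborhood $W = U \times V$ of $(x^*, y^*)$ small enough that, for every feasible $(x, y) \in W$ of \eqref{SPOref}, one has $y_i \ge s_i^\rho / 2 > 0$ for $i \in I_0(x^*)$ and $|x_i| \ge |x_i^*|/2 > 0$ for $i \notin I_0(x^*)$. These uniform componentwise bounds are the workhorses of the argument: they sandwich the complementarity penalty $|x|^T y$ against the linear penalty $\norm{Px}_1$.

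For the direction ``\eqref{Penalpha} exact $\Rightarrow$ \eqref{PenSPOalpha} exact'', I would plug the admissible test point $(x, y^*)$, with $x \in U$ satisfying $g(x) \in X$, into the exactness inequality of \eqref{Penalpha} with parameter $\tilde\alpha$. Since $|x^*|^T y^* = 0$ and $|x|^T y^* = \sum_{i \in I_0(x^*)} |x_i| s_i^\rho \le (\max_i s_i^\rho)\, \norm{Px}_1$, the $p^\rho(y^*)$-terms cancel on both sides and the inequality reduces to
\[
  f(x) + \tilde\alpha \big(\textstyle\max_i s_i^\rho\big)\, \norm{Px}_1 \ge f(x^*),
\]
which is precisely the local minimality of $x^*$ for \eqref{PenSPOalpha} with penalty parameter $\alpha' := \tilde\alpha \max_i s_i^\rho$.

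The converse is the interesting direction. Assume \eqref{PenSPOalpha} is exact at $x^*$ with parameter $\alpha'$. For $(x, y) \in W$ with $g(x) \in X$ and $y \ge 0$, I would bound $F_\alpha(x, y) := f(x) + p^\rho(y) + \alpha|x|^T y$ from below componentwise. For $i \in I_0(x^*)$, convexity of $p_i^\rho$ with minimizer $s_i^\rho$ gives $p_i^\rho(y_i) \ge m_i^\rho$, and $y_i \ge s_i^\rho/2$ gives $\alpha|x_i|y_i \ge \alpha(s_i^\rho/2)|x_i|$. For $i \notin I_0(x^*)$, the tangent inequality at $0$ together with $(p_i^\rho)'(0) \le 0$ yields $p_i^\rho(y_i) \ge (m_i^\rho + \rho) - |(p_i^\rho)'(0)|\, y_i$, while $|x_i| \ge |x_i^*|/2$ produces $\alpha|x_i|y_i \ge \alpha(|x_i^*|/2) y_i$. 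Summing and regrouping,
\[
  F_\alpha(x,y) \ge \big[ f(x) + (\alpha s_{\min}^\rho/2)\norm{Px}_1 \big] + M^\rho + (n - |I_0(x^*)|)\rho + C_\alpha \sum_{i \notin I_0(x^*)} y_i,
\]
where $C_\alpha := \alpha \min_{i \notin I_0(x^*)} |x_i^*|/2 - \max_i |(p_i^\rho)'(0)|$. Choosing $\alpha$ large enough so that $C_\alpha \ge 0$ and $\alpha s_{\min}^\rho/2 \ge \alpha'$, the bracket is bounded below by $f(x^*)$ via exactness of \eqref{PenSPOalpha}, the residual sum is nonnegative, and recognising that $F_\alpha(x^*, y^*) = f(x^*) + M^\rho + (n - |I_0(x^*)|)\rho$ closes the argument.

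The main obstacle is the treatment of the indices $i \notin I_0(x^*)$: although $y_i^* = 0$, the auxiliary $y_i$ may drift slightly positive, and this \emph{decreases} $p_i^\rho(y_i)$ below its nominal value $p_i^\rho(0) = m_i^\rho + \rho$, apparently weakening the penalty exactly where one needs it. The remedy, encoded in the coefficient $C_\alpha$, is that this drop is linearly controlled in $y_i$ and is dominated by the complementarity penalty $\alpha|x_i|y_i \ge \alpha(|x_i^*|/2) y_i$ for $\alpha$ sufficiently large, thanks to the uniform lower bound on $|x_i|$ on that index set. The degenerate case $x^* = 0$ makes the second sum empty and the argument simplifies; this should be noted but is routine.
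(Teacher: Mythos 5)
Your proposal is correct and follows essentially the same route as the paper: the direction ``\eqref{Penalpha} exact $\Rightarrow$ \eqref{PenSPOalpha} exact'' uses the identical test point $(x,y^*)$ and the bound $|x|^T y^* \le (\max_i s_i^\rho)\,\norm{Px}_1$, while the converse rests on the same uniform componentwise bounds ($y_i$ bounded away from $0$ on $I_0(x^*)$, $|x_i|$ bounded away from $0$ on its complement) and on dominating the drop of $p_i^\rho$ below $p_i^\rho(0)$ by the term $\alpha|x_i|y_i$ — your tangent-line estimate at $0$ is just the paper's mean-value/monotonicity argument in different clothing, and your threshold $C_\alpha\ge 0$ matches the paper's $\alpha\ge -\nabla p_i^\rho(0)/\sigma_x$. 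Only cosmetic remark: the bounds on $W$ should be stated for all points of $W$ (they follow from continuity alone), not only for \eqref{SPOref}-feasible ones, since you apply them to points satisfying merely $g(x)\in X$, $y\ge 0$.
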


\begin{proof}
Recall that Lemma~\ref{0normprop} implies $y_i^* = s_i^\rho$ for 
$i \in I_0(x^*)$ and $y_i^* = 0$ otherwise. In particular, we therefore
have $\{1,...,n\} = I_0(x^*) \cup I_0(y^*)$.

We first assume that \eqref{PenSPOalpha} be exact for \ref{TNLP}. Then we can find a neighborhood $U = U_x \times U_y$ around $(x^*,y^*)$ and constants $\sigma_x, \, \sigma_y > 0$ such that implication
$$
	(x,y) \in U \quad \Rightarrow \quad |x_i| > \sigma_x, \ y_j > \sigma_y \ \forall (i,j)\in I_0(y^*) \times I_0(x^*), 
$$
holds and, additionally, $x^*$ is a local minimizer of \eqref{PenSPOalpha} in $U_x$ for all sufficiently large $\alpha$. Now take an arbitrary 
element $(x,y) \in U$. Choose $\overline{y}$ such that $\overline{y}_i = y_i$ for $i \in I_0(x^*)$ and $\overline{y}_i = 0$ for $i \in I_0(y^*)$. 
It follows that
$$
   \norm{(x,\bar y) - (x^*,y^*)} \le \norm{(x,y) - (x^*,y^*)},
$$ 
so that we also have $(x, \overline{y}) \in U$. Furthermore, 
it follows that
\begin{align}\label{eq:rho-y-estimate}
\begin{split}
	p^\rho(\bar y) & = \sum_{i=1}^n p_i^\rho(\bar y_i) \ =
	\sum_{i \in I_0 (x^*)} p_i^\rho(\bar y_i) +
	\sum_{i \in I_0 (y^*)} p_i^\rho(\bar y_i) 
	\ = \sum_{i \in I_0 (x^*)} p_i^\rho(y_i) +
	\sum_{i \in I_0 (y^*)} p_i^\rho(0) \\
	& \ge \sum_{i \in I_0 (x^*)} p_i^\rho(s_i^{\rho}) +
	\sum_{i \in I_0 (y^*)} p_i^\rho(0) 
	\ = \sum_{i \in I_0 (x^*)} p_i^\rho(y_i^*) +
	\sum_{i \in I_0 (y^*)} p_i^\rho(y_i^*)
	= p^\rho(y^*),
\end{split}
\end{align}
where the inequality and the subsequent equality follow from the definitions of the minima $ s_i^{\rho} $ and the elements $ y_i^* $, respectively.
Moreover, we can measure a deviation of $y$ to $\overline{y}$ with the 
help of the penalty term $p^\rho$. To this end, fix an index $i \in I_0(y^*)$. Then
\begin{equation}\label{eq:p-rho-nonnegative-partly}
	p_i^\rho(y_i) - p_i^\rho(\overline{y}_i) + \alpha|x_i|y_i -\alpha|x_i|\overline{y}_i = \left(\nabla p^\rho_i(\xi_i) + \alpha|x_i|\right)y_i \ge \left(\nabla p^\rho_i(0) + \alpha\sigma_x\right)y_i,
\end{equation}
where the equation follows from the differential mean-value theorem
together with $ \bar{y}_i = 0 $ for $ i \in I_0(y^*) $, and the inequality
results from the choice of $ \sigma_x $ in combination with the fact that 
$\nabla p^\rho_i(\xi_i) \ge \nabla p^\rho_i(0)$ due to the convexity 
of each $p^\rho_i$ since this implies the monotonicity of its derivate. 
We can assure the right-hand side of the previous expression to be nonnegative by taking
$$
   \alpha \ge \frac{-\nabla p_i^\rho(0)}{\sigma_x} \quad \forall i \in I_0(y^*)
$$
(note that the lower bounds are positive numbers since the functions
$ p_i^{\rho} $ are assumed to attain their unique minimum within the
interval $ (0, + \infty ) $). This choice of $ \alpha $ then guarantees
that 
\begin{equation}\label{eq:p-rho-nonngative}
	p_i^\rho(y_i) - p_i^\rho(\overline{y}_i) + \alpha|x_i|y_i -\alpha|x_i|\overline{y}_i \geq 0 \quad \forall i = 1, \ldots, n
\end{equation}
as this follows from \eqref{eq:p-rho-nonnegative-partly} for all 
$ i \in I_0(y^*) $ (recall that we have $ y_i \ge 0 $), whereas
this holds obviously for all $ i \in I_0(x^*) $ since $ \bar y_i = y_i $
for these indices.
Now let $\overline{\alpha}$ be the very outer penalty parameter that correlates to the exactness of \eqref{PenSPOalpha}. Choose
$$
	\alpha^* \ge \max\left\{\frac{\overline{\alpha}}{\sigma_y}, \max_{i\in I_0(y^*)} \frac{-\nabla p_i^\rho(0)}{\sigma_x} \right\}.
$$
For any $\alpha \ge \alpha^*$, we then obtain
$$
	f(x) + p^\rho(y) + \alpha|x|^Ty \ge f(x) + p^\rho(\overline{y}) + \alpha|x|^T\overline{y} \ge f(x) + p^\rho(y^*) + \alpha\sigma_y\lVert Px\rVert_1 \ge f(x^*) + p^\rho(y^*),
$$
where the first inequality exploits \eqref{eq:p-rho-nonngative},
the second one uses \eqref{eq:rho-y-estimate} together with
\begin{equation*}
	| x |^T \bar y = \sum_{i=1}^n | x_i | \bar y_i = \sum_{i \in I(x^*)}
	| x_i | \bar y_i = \sum_{i \in I(x^*)}
	| x_i | y_i \geq \sigma_y \sum_{i \in I(x^*)} | x_i | = 
	\sigma_y \| P x \|_1,
\end{equation*}
and the final estimate takes into account the assumed exactness of
\eqref{PenSPOalpha} at $ x^* $ (note that $ Px^* = 0 $ by definition of 
the matrix $ P $). This verifies the exactness of \eqref{Penalpha}.

Conversely, assume that \eqref{Penalpha} is exact for \eqref{SPOref} at a local minimizer $(x^*,y^*)$ with neighborhood $U$, again in the form $U = U_x \times U_y$. Hence, we have
\begin{equation}\label{eq:pen-a-1}
	f(x) + p^{\rho}(x) + \alpha | x |^T y \geq f(x^*) + p^{\rho} (x^*) +
	\alpha | x^* |^T y^* = f(x^*) + p^{\rho} (x^*)
\end{equation}
for all $ (x,y) \in U_x \times U_y$ satisfying $ g(x) \in X $ and
$ y \geq 0 $, where the equation follows from the fact that 
$ \alpha | x^* |^T y^* $ by definition of $ y^* $. Now, observe that
\begin{equation}\label{eq:first-inequality}
	f(x) + \alpha \lVert Px\rVert_1 = f(x) + \alpha \sum_{i \in I_0(x^*)}
	| x_i | \geq f(x) + 
	\frac{\alpha}{\max_{j \in I(x^*)}y_j^*} \sum_{i \in I_0(x^*)}y_i^*|x_i|.
\end{equation}
Let $\overline{\alpha}$ denote the outer penalty parameter that correlates to the exactness of \eqref{Penalpha}. Set
$$
	\alpha^* = \overline{\alpha} \cdot \max_{j \in I_0(x^*)} y_j^* 
$$
and take $ \alpha \geq \alpha^* $. Choose and arbitrary $ x \in U_x $
satisfying $ g(x) \in X $. It follows that $ (x,y^*) \in U $.
Hence, we can apply \eqref{eq:pen-a-1} to this vector pair and obtain
\begin{equation*}
	f(x) + p^{\rho} (y^*) + \alpha | x |^T y^* \geq 
	f(x^*) p^{\rho} (y^*),
\end{equation*}
which simplifies to 
\begin{equation}\label{eq:simplifies-to}
	f(x) + \alpha | x |^T y^* \geq f(x^*).
\end{equation}
Altogether, we therefore have
\begin{align*}
	f(x) + \alpha \| Px \|_1 & 
	\geq f(x) + 
	\frac{\alpha}{\max_{j \in I(x^*)}y_j^*} \sum_{i \in I_0(x^*)}y_i^*|x_i| \\
	& = f(x) + \frac{\alpha}{\max_{j \in I(x^*)}y_j^*} | x |^T y^* \\
	& \geq f(x) + \frac{\alpha^*}{\max_{j \in I(x^*)}y_j^*} | x |^T y^* \\
	& = f(x) + \overline{\alpha} | x |^T y^* \\
	& \geq f(x^*).
\end{align*}
where the first inequality is taken from \eqref{eq:first-inequality},
the subsequent equation follows from the definition of $ y^* $, the
next estimate exploits that $ \alpha \geq \alpha^* $, afterwards we
use the definition of $ \alpha^* $, and the final inequality results
from the exact penalty property in \eqref{eq:simplifies-to}.
This proves the exactness of (\ref{PenSPOalpha}).
\end{proof}

Exactness of \eqref{Penalpha} is therefore equivalent to the exactness of \eqref{PenSPOalpha}. Consequently, it suffices to consider constraint qualifications for \eqref{TNLP} that imply the desired error bounds. As mentioned before, we will present the following two conditions.

\begin{defn}
Let $x^*$ be feasible for \eqref{SPO}. We say that the
\begin{enumerate}[label = (\roman*)]
    \item \emph{sparse quasi-normality} is satisfied at $x^*$, if there is no nonzero $\lambda = (\lambda^a, \lambda^b)$ such that
    $$P^T\lambda^a + g'(x^*)^T\lambda^b = 0$$ holds and that satisfies the following condition: there is $\{x^k\} \to x^*$, $\{y^k\} \to g(x^*)$ and $\{\lambda^k\} \to \lambda^b$ such that 
    $$\lambda^k \in N_X^F(y^k), \quad \lambda_i^a \neq 0 \ \Rightarrow \ \lambda_i^a x_i^k > 0, \quad \lambda_i^b \neq 0 \ \Rightarrow \ \lambda_i^b(g_i(x^k) -y_i^k) > 0 .$$
    \item \emph{sparse generalized Mangasarian Fromovitz} (SP-GMFCQ) is satisfied at $x^*$, if there is no nonzero $\lambda = (\lambda^a, \lambda^b)$ such that
    $$P^T \lambda^a + g'(x^*)^T \lambda^b= 0, \quad (\lambda^a, \lambda^b) \in \mathbb{R}^{|I_0(x^*)|} \times N_X^{lim}(x^*).$$
\end{enumerate}
\end{defn}

Obviously, quasi-normality implies the corresponding MFCQ condition, from which we can recover the desired error bounds as well as constraint qualifications to obtain stationarity at a local minimizer.

\begin{thm}
Let $x^*$ be feasible for \eqref{SPO} and assume sparse quasi-normality or SP-GMFCQ to hold at $x^*$. Then the metric subregularity for 
$$(Px, g(x)) \in \{0\}^{|I_0(x^*)|} \times X$$
is satisfied at $x^*$. Furthermore, the error bound property for \eqref{TNLP} is satisfied at $x^*$.
\end{thm}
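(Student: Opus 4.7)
The plan is to split the argument into three stages. First, I would observe that SP-GMFCQ is actually stronger than sparse quasi-normality, so it suffices to prove both conclusions under the latter (weaker) hypothesis. To see this, suppose sparse quasi-normality fails: then there exist a nonzero $\lambda = (\lambda^a, \lambda^b)$ with $P^T\lambda^a + g'(x^*)^T\lambda^b = 0$ and sequences $\{x^k\} \to x^*$, $\{y^k\} \to g(x^*)$, $\{\lambda^k\} \to \lambda^b$ with $\lambda^k \in N_X^F(y^k)$. The definition of the limiting normal cone forces $\lambda^b \in N_X^{\lim}(g(x^*))$, which together with the equation $P^T\lambda^a + g'(x^*)^T\lambda^b = 0$ contradicts SP-GMFCQ. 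Hence I may assume sparse quasi-normality from now on.

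Second, I would establish metric subregularity of the combined mapping $h(x) := (Px, g(x))$ at $x^*$ relative to the product set $D := \{0\}^{|I_0(x^*)|} \times X$. This is the technical heart and follows the familiar blueprint of the error-bound literature (see, e.g., \cite{Gfrerer2013, MiT-2011}): argue by contradiction and assume the existence of a sequence $x^k \to x^*$ with
$$\text{dist}_{h^{-1}(D)}(x^k) > k \cdot \text{dist}_D(h(x^k)).$$
Project $h(x^k)$ onto $D$ componentwise -- the first component to $0$, the second to some $y^k \in X$ with $y^k \to g(x^*)$ -- and read off Fréchet normals $(\lambda^{a,k}, \lambda^{b,k})$, where in particular $\lambda^{b,k} \in N_X^F(y^k)$. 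After a suitable normalization and extraction of a subsequence, the limit $(\lambda^a, \lambda^b)$ is nonzero, satisfies $P^T\lambda^a + g'(x^*)^T\lambda^b = 0$, and fulfils the sign/sequence conditions required by sparse quasi-normality, contradicting the hypothesis.

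Third, the error bound property follows immediately from metric subregularity. Using the componentwise structure of $D$, one has $\text{dist}_D((u,v)) = \|u\|_1 + \text{dist}_X(v)$ (up to a fixed norm-equivalence constant), so metric subregularity provides $\kappa, \varepsilon > 0$ with
$$\text{dist}_C(x) \le \kappa \big(\|Px\|_1 + \text{dist}_X(g(x))\big) \quad \forall x \in B_\varepsilon(x^*).$$
Restricting to $x \in g^{-1}(X)$ kills the second term and yields $\text{dist}_C(x) \le \kappa \|Px\|_1$, which is precisely the error bound property from the definition.

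The principal obstacle is the second step, where the passage from sparse quasi-normality to metric subregularity demands careful bookkeeping: the Fréchet normals to the product set $D$ must be handled on each factor separately, the normalized multiplier sequence must be shown to converge to a \emph{nonzero} limit, and the sign correlation between $\lambda^a_i$ and the signs of the iterates $x^k_i$ for $i \in I_0(x^*)$ (and the analogous condition on $\lambda^b_i$) has to be preserved in the limit. The cited references \cite{Gfrerer2013, MiT-2011, andreani2025primal} supply essentially all the machinery, but one must verify that the specific coupling of the sparsity filter $P$ with the abstract constraint $g(x) \in X$ fits within that framework.
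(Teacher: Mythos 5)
Your overall architecture is reasonable, and two of your three steps are correct: the reduction (SP-GMFCQ implies sparse quasi-normality, so it suffices to argue under the weaker condition) is the right direction of implication --- failure of quasi-normality produces, through its defining sequences, a nonzero $(\lambda^a,\lambda^b)$ with $\lambda^b \in N_X^{\lim}(g(x^*))$ and $P^T\lambda^a + g'(x^*)^T\lambda^b = 0$, violating SP-GMFCQ --- and the final passage from metric subregularity of the system $(Px,g(x)) \in \{0\}^{|I_0(x^*)|}\times X$ to the error bound for \eqref{TNLP}, by noting $C = h^{-1}(D)$ and restricting to $x \in g^{-1}(X)$ so that the $\mathrm{dist}_X(g(x))$ term vanishes, is exactly right. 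Be aware, though, that the paper itself does not prove the central implication at all: its entire proof is a reference to \cite{Benko2021}, so in effect you are attempting more than what is written there.

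The genuine gap is in your second step, which is the whole substance of the theorem. From the negation of metric subregularity you propose to project $h(x^k) = (Px^k, g(x^k))$ onto $D = \{0\}^{|I_0(x^*)|}\times X$ and ``read off'' Fr\'echet normals $(\lambda^{a,k},\lambda^{b,k})$ at the projections. Those residual vectors are indeed proximal (hence Fr\'echet) normals at the projection points and they do carry the required sign information, but nothing in your construction forces the limiting equation $P^T\bar\lambda^a + g'(x^*)^T\bar\lambda^b = 0$: a projection residual at an arbitrary infeasible point has no reason to be (approximately) annihilated by $h'(x^k)^T$, and the violated inequality $\mathrm{dist}_{h^{-1}(D)}(x^k) > k\,\mathrm{dist}_D(h(x^k))$ is never actually used in your sketch to produce it. In the known proofs that quasi-normality yields metric subregularity (cf.\ \cite{MiT-2011,andreani2025primal,Benko2021}) this equation is generated by a variational principle: one applies Ekeland's principle to $x \mapsto \mathrm{dist}_D(h(x))$ near $x^*$, the negated subregularity inequality is what drives the Ekeland perturbation parameter to zero, and approximate stationarity at the auxiliary Ekeland points $u^k$ (not at your original $x^k$) combined with the chain rule and Lemma~\ref{lem:subdiffdist} gives $h'(u^k)^T\lambda^k \to 0$ after normalization, while the projection residuals at $h(u^k)$ supply both the Fr\'echet-normal membership $\lambda^{b,k} \in N_X^F(y^k)$ and the sign conditions of sparse quasi-normality. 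Without this (or an equivalent penalization/decrease-principle device) your limiting argument does not close; with it, the argument reduces to the known result, which is precisely what the paper cites.
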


\begin{proof}
Compare again \cite{Benko2021}.
\end{proof}

In the common setting, where one has $X = \mathbb{R}^m_{\le 0}$, the SP-GMFCQ collapses to SP-MFCQ as presented in our previous work.

Exactness results, as mentioned before, are typically formulated in terms 
of local minima. From a practical point of view, however, corresponding
results on stationary points are significantly more important,
though much less investigated. The following result shows that we
also have exactness in terms of stationary points.

\begin{thm}
Let $(x^*,y^*)$ be stationary for \eqref{SPOref}. Then there exists an $\alpha^* > 0$ such that $(x^*,y^*)$ is a stationary point of \eqref{Penalpha} for all $\alpha \ge \alpha^*$.
\end{thm}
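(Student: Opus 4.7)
The plan is to verify \eqref{PenStat} componentwise, exploiting the partition $\{1,\ldots,n\}=I_0(x^*)\cup I_0(y^*)$ that comes with any feasible point of \eqref{SPOref} whose biactive set is empty. The relevant shape $y^*$ given by \eqref{Eq:y-star} is forced on local minima by Lemma~\ref{0normprop}(c), and the natural notion of stationarity used in this paper carries the same structure, so that M-stationarity of \eqref{SPOref} supplies a multiplier $\lambda\in N_X^{\lim}(g(x^*))$ with
$$
\xi_i:=[\nabla f(x^*)+g'(x^*)^T\lambda]_i=0 \quad\text{for all } i\in I_0(y^*),
$$
while on $I_0(x^*)$ the components $\xi_i$ are unconstrained, being absorbed by the multiplier of the local equality $x_i=0$. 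The $y$-block collapses to $(p_i^\rho)'(s_i^\rho)=0$ on $I_0(x^*)$, which is automatic from the definition of $s_i^\rho$, and to $(p_i^\rho)'(0)\le 0$ on $I_0(y^*)$, which is automatic from convexity of $p_i^\rho$ combined with $s_i^\rho>0$.

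I would then insert $(x^*,y^*)$ into \eqref{PenStat} index by index. For $i\in I_0(y^*)$ one has $x_i^*\ne 0$ and $y_i^*=0$, so $\partial|x_i^*|=\{\mathrm{sgn}(x_i^*)\}$ is a singleton, but the prefactor $y_i^*=0$ kills the penalty term in the $x$-block; the corresponding $x$-equation is therefore the already known identity $\xi_i=0$. The $y$-equation reads $(p_i^\rho)'(0)+\alpha|x_i^*|-\gamma_i=0$, and the choice $\gamma_i:=(p_i^\rho)'(0)+\alpha|x_i^*|$ yields $\gamma_i\ge 0$ as soon as $\alpha\ge -(p_i^\rho)'(0)/|x_i^*|$, a finite bound since $x_i^*\ne 0$. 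For $i\in I_0(x^*)$ one has $x_i^*=0$ and $y_i^*=s_i^\rho>0$, so $\partial|x_i^*|=[-1,1]$ and the $x$-equation becomes $\xi_i+\alpha s_i^\rho s_i=0$ for some $s_i\in[-1,1]$; such a selection exists if and only if $|\xi_i|\le \alpha s_i^\rho$. The $y$-equation reduces to $(p_i^\rho)'(s_i^\rho)=0$, which again holds by the definition of $s_i^\rho$.

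Collecting the two bounds I would set
$$
\alpha^*:=\max\left\{\max_{i\in I_0(x^*)}\frac{|\xi_i|}{s_i^\rho},\ \max_{i\in I_0(y^*)}\frac{-(p_i^\rho)'(0)}{|x_i^*|}\right\},
$$
so that for every $\alpha\ge\alpha^*$ valid selections $s\in\partial|x^*|$ and $\gamma\ge 0$ exist and, together with the multiplier $\lambda$ inherited from the stationarity of \eqref{SPOref}, satisfy \eqref{PenStat}. The delicate step I anticipate is translating the hypothesis ``$(x^*,y^*)$ stationary for \eqref{SPOref}'' into the clean componentwise system used above: this requires the biactive set to be empty so that the limiting normal cone to $\{x\circ y=0,\ y\ge 0\}$ decouples along indices into the two unambiguous alternatives $x_i=0$ with $y_i>0$ and $y_i=0$ with $x_i\ne 0$. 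Once that decoupling is available, the rest of the argument is purely algebraic and relies only on the flexibility of $\partial|\cdot|$ at $0$ together with the positivity of $s_i^\rho$, which allows the penalty term $\alpha s_i^\rho s_i$ to realize any real value of magnitude at most $\alpha s_i^\rho$.
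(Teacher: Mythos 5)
Your proposal is correct and follows essentially the same route as the paper's proof: both verify \eqref{PenStat} index by index using the decoupled stationarity system of \eqref{SPOref}, absorb the $x$-block residual on $I_0(x^*)$ into $\alpha y_i^*\partial|x_i^*| = \alpha s_i^\rho\,[-1,1]$, construct the multiplier $\gamma_i=\nabla p_i^\rho(0)+\alpha|x_i^*|\ge 0$ on $I_0(y^*)$, and arrive at the same finite threshold $\alpha^*$ (the paper writes it via the \eqref{SPOref}-multipliers $\gamma^x_i,\gamma^y_i$, which correspond to your $-\xi_i/s_i^\rho$ and $-\nabla p_i^\rho(0)/x_i^*$ up to the nonuniqueness coming from $\nu^*$). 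The ``delicate'' decoupling you flag is resolved in the paper simply by taking that componentwise multiplier system as the working definition of stationarity for \eqref{SPOref}, so no additional argument is required there.
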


\begin{proof}
By stationarity of $(x^*,y^*)$ for \eqref{SPOref}, we have
\begin{align*}
    0 & = \nabla f(x^*) + g'(x^*)^T\lambda^* + \sum_{i \in I_0(x^*)}\gamma_i^x y_i^* e_i, \\
    0 &= \nabla p^\rho (y^*) + \sum_{i \in I_0(y^*)} \left(\gamma_i^y x_i^* - \nu_i^*\right) e_i
\end{align*}
for suitable Lagrange multipliers $\lambda^*, \gamma^x, \gamma^y, \nu^*$, where $\lambda^* \in N_X(g(x^*))$ and $\nu_i^* \ge 0$. Let 
$$
	\alpha \ge \alpha^* := \max\left\{\max_{i \in I_0(x^*)} |\gamma_i^x|, \max_{i \in I_0(y^*)} |\gamma_i^y| \right\}.
$$ 
Then, clearly, we have
$$
	\gamma_i^x y_i^* \in \alpha y_i^* \partial |x_i^*| \quad \forall i \in I_0(x^*)
$$
since $\partial |x_i^*| = [-1,1]$. Furthermore. for each
$i \in I_0(y^*)$, we get
$$
	0 =\nabla p_i^\rho(0) + \gamma_i^yx_i^* - \nu_i^* = \nabla p_i^\rho(0) + \alpha |x_i^*| - (\nu_i^* + \underbrace{\alpha |x_i^*| - \gamma_i^y x_i^*}_{\ge 0}).
$$
Now set $\lambda = \lambda^*,\ \gamma_i = \nu_i^* + \alpha |x_i^*| - \gamma_i^y x_i^* \geq \nu_i^* \geq 0 $ for $i \in I_0(y^*)$ 
and $\gamma_i = 0$ otherwise. The tuple $(x^*,y^*,\lambda, \gamma)$
satisfies \eqref{PenStat} and $(x^*,y^*)$ is stationary for \eqref{Penalpha}.
\end{proof}

The following result contains an exactness statement for the other direction. This result may be viewed as a generalization of a related theorem given in \cite{Ralph2004} in the context of mathematical programs with equilibrium constraints (MPECs), though our assumptions are weaker.

\begin{thm}\label{thm:exact}
Let $(x^*,y^*)$ be a stationary point of \eqref{SPOref} such that SP-GMFCQ holds at $x^*$. Then there exists an $\alpha^* > 0$ and a neighborhood $U$ of $(x^*,y^*)$ such that for all $\alpha \ge \alpha^*$, every stationary point of \eqref{Penalpha} in $U$ is a stationary point of \eqref{SPOref}.
\end{thm}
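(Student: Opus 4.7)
The plan is to argue by contradiction. If the statement fails, a diagonal argument produces sequences $\alpha_k \to \infty$ and $(x^k, y^k) \to (x^*, y^*)$ such that each $(x^k, y^k)$ is stationary for \eqref{Penalpha} with $\alpha = \alpha_k$ but is not stationary for \eqref{SPOref}. Throughout, I exploit that the biactive set at $(x^*, y^*)$ is empty by Lemma~\ref{0normprop}(c), so $\{1,\ldots,n\} = I_0(x^*)\cup I_0(y^*)$, with $y^*_i = s_i^{\rho} > 0$ for $i \in I_0(x^*)$ and $|x^*_i| > 0$ for $i \in I_0(y^*)$. Shrinking the neighborhood, I may assume $y_i^k \ge s_i^{\rho}/2$ for $i \in I_0(x^*)$ and $|x_i^k| \ge |x_i^*|/2$ with $\operatorname{sign}(x_i^k) = \operatorname{sign}(x_i^*)$ for $i \in I_0(y^*)$.

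First I show $y_i^k = 0$ for $i \in I_0(y^*)$ and large $k$. The $y$-component of \eqref{PenStat} forces the implication $y_i^k > 0 \;\Rightarrow\; \nabla p_i^{\rho}(y_i^k) + \alpha_k |x_i^k| = 0$. Since $|\nabla p_i^{\rho}|$ is uniformly bounded by some $M$ on the bounded neighborhood while $\alpha_k |x_i^k| \ge \alpha_k |x_i^*|/2 \to \infty$, this equality is eventually impossible, so $y_i^k = 0$.

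Next I show $x_i^k = 0$ for $i \in I_0(x^*)$ and large $k$, which is the heart of the argument and the place where SP-GMFCQ enters. Let $s^k$ denote the subgradient in $\partial|x^k|$ and $\lambda^k \in N_X^{\lim}(g(x^k))$ the multiplier supplied by the $x$-component of \eqref{PenStat}, and introduce the pseudo-multipliers $\eta^k \in \mathbb{R}^{|I_0(x^*)|}$ by $\eta^k_i := \alpha_k y_i^k s_i^k$. Using the previous step, the $x$-component of \eqref{PenStat} rewrites as
\begin{equation*}
    \nabla f(x^k) + P^T \eta^k + g'(x^k)^T \lambda^k = 0.
\end{equation*}
If $\|(\eta^k, \lambda^k)\|$ remains bounded, then $\eta_i^k$ is bounded; but $\alpha_k y_i^k \ge \alpha_k s_i^{\rho}/2 \to \infty$ forces $s_i^k \to 0$, and since $|s_i^k|=1$ whenever $x_i^k \ne 0$, necessarily $x_i^k = 0$ for large $k$. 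Otherwise $\|(\eta^k, \lambda^k)\| \to \infty$; normalizing the displayed equation by this norm and passing to the limit along a convergent subsequence, using continuity of $\nabla f$ and $g'$ together with the closedness of the limiting normal cone mapping $N_X^{\lim}(\cdot)$, produces a nonzero pair $(\mu^a, \mu^b) \in \mathbb{R}^{|I_0(x^*)|} \times N_X^{\lim}(g(x^*))$ satisfying $P^T \mu^a + g'(x^*)^T \mu^b = 0$, contradicting SP-GMFCQ.

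Once $y_i^k = 0$ for $i \in I_0(y^*)$ and $x_i^k = 0$ for $i \in I_0(x^*)$, the point $(x^k, y^k)$ is feasible for \eqref{SPOref}, and I conclude by converting the penalty multipliers into \eqref{SPOref}-stationarity multipliers in the same fashion as the preceding theorem: for $i \in I_0(x^*)$ the subgradient entry $s_i^k \in [-1,1]$ provides the free multiplier of the constraint $x_i y_i = 0$, while for $i \in I_0(y^*)$ the coefficient $\alpha_k \operatorname{sign}(x_i^k)$ plays that role and the slack $\gamma_i^k \ge 0$ absorbs the bound $y_i \ge 0$. This contradicts the assumption that $(x^k, y^k)$ is not stationary for \eqref{SPOref}, completing the proof. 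The main obstacle is the unbounded case above: the pseudo-multiplier $\eta_i^k = \alpha_k y_i^k s_i^k$ belongs to a constraint that has been absorbed into the penalty, and reconstructing it so that SP-GMFCQ becomes applicable is what requires the normalization argument.
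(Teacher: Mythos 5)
Your proposal is correct and follows essentially the same route as the paper: a contradiction via sequences $\alpha_k \to \infty$, $(x^k,y^k)\to(x^*,y^*)$, first forcing $y_i^k=0$ for $i\in I_0(y^*)$ from the divergence of $\alpha_k|x_i^k|$, then forcing $x_i^k=0$ for $i\in I_0(x^*)$ via the normalized-multiplier limit contradicting SP-GMFCQ (your bounded/unbounded case split is just a slightly more explicit organization of the paper's argument, since $x_i^k\neq 0$ automatically makes the multiplier sequence unbounded), and finally transferring the penalty multipliers to \eqref{SPOref}. The only cosmetic points are that the empty biactive set at the stationary point $(x^*,y^*)$ follows from the stationarity conditions themselves rather than from Lemma~\ref{0normprop}(c) (which concerns local minima), and that one should note, as the paper does, that for $i\notin I_0(y^*)$ the relation $0=\nabla p_i^\rho(y_i^k)$ (hence $y_i^k=s_i^\rho$) follows from the penalty stationarity once $x_i^k=0$; both are one-line verifications.
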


\begin{proof}
Assume, by contradiction, that there is a sequence $\alpha_k \to \infty$ and a sequence $\{(x^k,y^k)\}$ such that $(x^k,y^k) \to (x^*,y^*)$, where $(x^k,y^k)$ is stationary for \eqref{Penalpha} with $\alpha = \alpha_k$, but not stationary for \eqref{SPOref}. The proof will be carried out in two parts.
\begin{enumerate}[label = (\roman*)]
    \item We show that $y_i^k$ terminates in $0$ for sufficiently large $k$ if $i \in I_0(y^*)$.
    \item We show that, under SP-MFCQ, $x^k_i$ also terminates in $0$ for sufficiently large $k$ if $i \in I_0(x^*)$.  
\end{enumerate}
The strict complementarity between $x^k$ and $y^k$ is then a consequence of strict complementarity of $(x^*,y^*)$. It follows that $(x^k,y^k)$ is feasible for \eqref{SPOref} and, by stationarity with respect to \eqref{Penalpha}, therefore also stationary for \eqref{SPOref}, which then finishes the contradiction.

(i): Let $i \in I_0(y^*)$ and assume $y_i^k \searrow 0$. By stationarity of \eqref{Penalpha} we have
$$0 = \nabla p_i^\rho(y_i^k) + \alpha_k |x_i^k|.$$
Since $(x^*,y^*)$ is stationary for \eqref{SPOref}, we have, by strict complementarity, that $x_i^* \neq 0$ so that $\alpha_k |x_i^k| \to \infty$, whereas $\nabla p_i^\rho(y_i^k)$ remains bounded and the right hand side in the above equation diverges, which is a contradiction. This implies that $y_i^k = 0$ for almost all $k$, if $y_i^* = 0$. 

(ii): We also claim that $x_i^k = 0$ for all $i \in I_0(x^*)$ and all $k$ sufficiently large. 
To this end, note that
tationarity of $(x^k,y^k)$ for \eqref{Penalpha} can be written as
\begin{equation} 
	0 = \nabla f(x^k) + g'(x^k)^T\lambda^k + \alpha_k \sum_{i \in I_0(x^*)} \gamma_i^k e_i + \alpha_k \sum_{i \in I_0(y^*)} \gamma_i^ke_i, \label{xkStat} 
\end{equation}
where $ \gamma_i^k \in y_i^k\partial(|x_i^k|)$ and $\lambda^k \in N_X^{\lim}(g(x^k))$. By part (i), we immediately have $\gamma_i^k = 0$ for $i \in I_0(y^*)$ and $k$ sufficiently large. Assume now there is $i \in I_0(x^*)$ such that $|x^k_i| \searrow 0$. Then, in particular, $ \gamma_i^k \in \{-y_i^k, y_i^k\}$, where $y_i^k \to y_i^* \neq 0$. In particular, 
this implies that the sequence $ \{ \| (\lambda^k, \alpha_k \gamma^k )
\| \} $ converges to $ \infty $ since $ \alpha_k \to \infty $.
Without loss of generality, we may assume that the corresponding
normalized sequence converges, say
$$
   ( \bar{\lambda}, \ \bar{\gamma}) = \lim_{k \to \infty} \frac{1}{\lVert (\lambda^k, \ \alpha_k \gamma^k)\rVert} (\lambda^k, \ \alpha_k\gamma^k).
$$
Using the cone property of $ N_X^{\lim} $ as well as the robustness
of the limiting normal cone, see \cite[Prop.\ 1.3]{Mor-2018}, we have $\bar{\lambda} \in N_X^{\lim}(g(x^*))$. Dividing (\ref{xkStat}) with $\lVert(\lambda^k, \ \alpha_k\gamma^k)\rVert$ and pushing to the limit $k \to \infty$, we therefore get
$$ 
	0 = g'(x^*)^T \bar{\lambda} + P^T\bar{\gamma}, \quad (\bar{\lambda}, \ \bar{\gamma}) \neq 0,
$$
a contradiction to the assumed SP-GMFCQ. Hence, we have $ x_i^k = 0 $
for all $ i \in I_0(x^*) $ and all sufficiently large $ k $.

As a consequence of parts (i) and (ii), it follows that, eventually,
$I_0(x^k) = I_0(x^*)$ and $I_0(y^k) = I_0(y^*)$ holds so that, in fact, $(x^k,y^k)$ is strictly complementary and, in particular, feasible for \eqref{SPOref}. 

It remains to see that $y^k$ takes the values $y_i^k = s_i^\rho \ (= y_i^*)$ for $i \notin I_0(y^*)$. This is true since, for all $i \notin I_0(y^*) = I_0(y^k)$, we obtain from \eqref{PenStat} that 
$$ 
   0 = \nabla p_i^\rho(y_i^k)
$$
holds, so that $ y_i^k $ is a minimum of the convex function
$p_i^{\rho} $. However, by assumption, $ p_i^{\rho} $ attains its unique
minimum at $ s_i^{\rho} $, so that $y_i^k = s_i^\rho$ follows
for all sufficiently large $ k $. 
This finishes the proof.
\end{proof}

The final result of this section is not directly connected to the previous exactness results. In a numerical setting, increasing the penalty parameter $\alpha$ should, regardless of exactness (or the satisfaction of
suitable constraint qualifications), minimize the residual $|x|^Ty$. 
This is, in fact, the case even if we are only able to find approximate stationary points.

\begin{thm}
Let $\varepsilon > 0$ such that there exists a $t^* > 0$ for which $\nabla p^\rho_i(t^*) > \varepsilon$. Let $\{(x^k,y^k)\}$, $\{z^k\}$ be sequences satisfying
$$
	g(x^k) + z^k \in X,  \quad z^k \in B_{\varepsilon}(0), \quad y_i^k ,
	\ge 0
$$
and assume that there are sequences $\{\lambda^k\} \subset N_X^{\lim}(g(x^k) + z^k)$ and $\gamma^k \in N_{\ge 0}(y^k)$ such that
$$
	\begin{pmatrix}
    -\nabla f(x^k) - \alpha_k y^k\circ \partial(|x^k|) - g(x^k)^T\lambda^k  \\
    -\nabla p^\rho(y^k) - \alpha_k|x^k| + \sum_{i\in I_0(y^k)}\gamma_i^ke_i 
	\end{pmatrix} \in B_{\varepsilon}(0)^2.
$$
If $\alpha_k \to \infty$, then necessarily $|x^k|\circ y^k \to 0$.
\end{thm}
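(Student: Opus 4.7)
The plan is to analyze the approximate stationarity condition coordinatewise and to show $|x_i^k| y_i^k \to 0$ for each $i$. Fix $i$. If $y_i^k = 0$ the claim is trivial, so assume $y_i^k > 0$; then $i \notin I_0(y^k)$, and since the correction $\sum_{j \in I_0(y^k)} \gamma_j^k e_j$ only affects coordinates on which $y_j^k = 0$, the $i$-th entry of the second block of the stationarity residual reduces to
\[
	\bigl| \nabla p_i^\rho(y_i^k) + \alpha_k |x_i^k| \bigr| \le \varepsilon.
\]
Note that the first block of the stationarity condition, as well as the multipliers $\lambda^k$ and the perturbations $z^k$, do not enter this argument at all.

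The crucial, and really only nontrivial, step is to show that the sequence $\{y_i^k\}$ remains bounded; otherwise, $\nabla p_i^\rho(y_i^k)$ could blow up and spoil any attempt to deduce $|x_i^k| \to 0$ from the displayed bound. This is exactly where the hypothesis $\nabla p_i^\rho(t^*) > \varepsilon$ comes in. Convexity of $p_i^\rho$ ensures that $\nabla p_i^\rho$ is nondecreasing, so if $y_i^k \ge t^*$, then $\nabla p_i^\rho(y_i^k) \ge \nabla p_i^\rho(t^*) > \varepsilon$; combined with the nonnegative contribution $\alpha_k |x_i^k| \ge 0$, this would force $\nabla p_i^\rho(y_i^k) + \alpha_k |x_i^k| > \varepsilon$, contradicting the displayed inequality. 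Hence $y_i^k < t^*$ uniformly in $k$.

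Once boundedness of $y_i^k$ is secured, continuity of $\nabla p_i^\rho$ on $[0,t^*]$ provides a uniform bound $|\nabla p_i^\rho(y_i^k)| \le M_i$ for some $M_i > 0$. Rearranging the displayed inequality yields $\alpha_k |x_i^k| \le M_i + \varepsilon$, and since $\alpha_k \to \infty$ this forces $|x_i^k| \to 0$. Multiplying by the bounded factor $y_i^k < t^*$ then gives $|x_i^k| y_i^k \to 0$. Applying this argument componentwise delivers $|x^k| \circ y^k \to 0$, as required.
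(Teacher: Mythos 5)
Your proof is correct and follows essentially the same route as the paper's: isolate the second block of the approximate stationarity condition, note the $\gamma^k$-term vanishes in coordinates with $y_i^k>0$, use monotonicity of $\nabla p_i^\rho$ together with the $t^*$-hypothesis to bound $y_i^k$, and let $\alpha_k\to\infty$ force the product to zero. The only (harmless) difference is cosmetic: the paper multiplies the residual equation componentwise by $y^k$ and argues by contradiction along subsequences, whereas you bound $y_i^k$ by $t^*$ and $\alpha_k|x_i^k|$ by $M_i+\varepsilon$ directly, which even yields the slightly stronger conclusion $|x_i^k|\to 0$ on the indices with $y_i^k>0$.
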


\begin{proof}
It suffices to consider the assumption
$$
	-\nabla p^\rho(y^k) - \alpha_k|x^k| + 
	\sum_{i\in I_0(y^k)}\gamma_i^ke_i \in B_{\varepsilon}(0).
$$
This implies the existence of another bounded sequence $\{\delta^k\} \subset B_{\varepsilon}(0)$ such that
$$
	0 = \nabla p^\rho(y^k) + \alpha_k |x^k| - \sum_{i \in I_0(y^k)}\gamma_i^ke_i + \delta^k.
$$
Now multiply the above equation componentwise with $y^k$. We then obtain
for each $ i \not\in I_0(y^k) $ (otherwise there is nothing to show)
$$
	0 = y_i^k \nabla p_i^\rho(y_i^k) + \alpha_k y_i^k|x_i^k| +
	y_i^k\delta^k_i > y_i^k (\nabla p_i^\rho(y_i^k) - \varepsilon) +
	\alpha_ky_i^k|x_i^k|.
$$
Assume $y_i^k \to \infty$. Then $y_i^k > t^*$ and, by assumption and
convexity of $ p_i^{\rho} $, we then have  $\nabla p^\rho_i(y_i^k) - \varepsilon > 0$, which leads to a contradiction. Hence $y_i^k$ is bounded. Now assume there is a subsequence $K$ for which $y_i^k|x_i^k| > c$. Then $\alpha_ky_i^k|x_i^k| \to _K \infty$, which again is a contradiction.
Altogether, this shows that $ | x^k |^T y^k \to 0 $ for $ k \to \infty $.
\end{proof}

We only require the iterates $y^k$ from the exact penalty method to be nonnegative and $\nabla p_i^\rho(t)$ to become sufficiently large. Both is very easy to satisfy, since, on the on hand, we can simply project any iterate $y^k$ onto $\mathbb{R}_{\ge 0}^n$ and, on the other hand, choose $\nabla p^\rho$ in a way such that the derivative assumption from the 
previous result holds. Note that this assumption is satisfied for
all mappings from Example~\ref{Ex:pirho}.

\section{Solution Strategies for Penalized Problems} \label{section:solstrat}

The solution of the penalized subproblems requires a beneficial structure
of the constraints $g(x) \in X$. For example, for $X = \mathbb{R}^n_{\le0}$, one could simply augment the constraints by changing the objective function to
$$
	F(x) = f(x) + \mu \max\{0,g(x)\}^2,
$$
and incorporate an augmented Lagrangian method. On the other hand, for 
$g(x) = x$ and the set $X$ being nonempty, closed and convex, which admits a suitable projection, one could approach \eqref{Penalpha} with a projected gradient or proximal gradient method. We are mostly concerned with the latter scenario. To this end, we first consider an unconstrained version of \eqref{Penalpha} and present two unique ways of applying first-order methods to the penalty formulation. These methods have to be slightly adapted in a practical setting to accommodate for a possible projection, which was no issue in our numerical test examples.

\subsection{A Projected Spectral Gradient Method}

We rewrite problem \eqref{Penalpha} with the introduction of an auxiliary variable $s$ in the form
\begin{equation}
    \min_{x,s,y} f(x) + p^\rho(y) + \alpha s^Ty, \st y \ge 0, \ |x| \le s. \label{PenEpi}
\end{equation}
The problem \eqref{PenEpi} has a convex feasible set and is equivalent to \eqref{Penalpha} in the sense that both objective functions take the same exact value if, for $y_i \neq 0$, we have $|x_i| = s_i$, which is also necessary at a local minimum of \eqref{PenEpi}. However, $|x_i| < s_i$ does not change the target value of \eqref{PenEpi} whenever $y_i = 0$, which shows that \eqref{PenEpi}, in general, produces infinitely many local minima, so one may be interested in using the more rigorous constraint 
$|x| = s$. This feasible set, however, is no longer convex and computationally less preferable. We rather propose to simply modify a solution approach to \eqref{PenEpi} in such a way that all iterates
satsify the constraint $|x| = s$, which is not hard to achieve since simply overwriting $s$ to the value of $|x|$ leads, in any case, to a better target value of \eqref{PenEpi}.

As mentioned before, the set $\Omega := \{(x,s,y) \ : \ |x| \le s, \ y \ge 0\}$ is convex since, by reordering the entries of $(x,s,y)$, one may regard $\Omega$ as the Cartesian product
$$
	\Omega = \Pi_{i = 1}^n\text{epi}(|\cdot|) \times \RR^n_+,
$$
where $\text{epi}(|\cdot|)$ denotes the epigraph of the absolute value function:
$$
	\text{epi}(|\cdot|) := \{(x,s) \ : \ |x| \le s\}.
$$
The latter is convex since the absolute value function is convex. Then it is easy to compute the projection onto $\Omega$.

\begin{lem}
Let $\Omega = \{(x,s,y) \ : \ |x| \le s, \ y \ge 0\}$. Then the projection $(x,s,y) = P_{\Omega}(u,v,w)$ is given by
$$(x_i,s_i) = \begin{cases} (u_i, v_i), \quad &\text{if} \quad |u_i| \le v_i, \\
(0,0), \quad &\text{if} \quad |u_i| \le -v_i,\\
\frac{1}{2}(u_i + v_i, u_i + v_i), \quad &\text{if} \quad u_i > |v_i|, \\
\frac{1}{2}(u_i - v_i, v_i - u_i), \quad &\text{if} \quad u_i < -|v_i|,\end{cases}$$
and $y = \max\{w, 0\}$.
\end{lem}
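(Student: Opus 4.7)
The plan is to exploit the product structure of $\Omega$ to reduce the problem to computing two elementary projections: one onto $\mathbb{R}_+$, and one onto the two-dimensional epigraph $E:=\mathrm{epi}(|\cdot|)=\{(x,s)\in\mathbb{R}^2:|x|\le s\}$. Since $\Omega$ is, after reordering the coordinates, the Cartesian product $\prod_{i=1}^n E \times \mathbb{R}^n_+$, the projection operator decomposes componentwise, so it is enough to verify the two elementary projection formulas separately. The $\mathbb{R}_+$ part is immediate from $P_{\mathbb{R}_+}(w_i)=\max\{w_i,0\}$, applied componentwise.

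The main work is to compute $P_E(u_i,v_i)$. I would first observe that $E$ is a polyhedral convex cone in $\mathbb{R}^2$ (in fact the two-dimensional second-order cone), spanned by the rays $R_+:=\mathbb{R}_+\cdot(1,1)$ and $R_-:=\mathbb{R}_+\cdot(-1,1)$. A direct computation of its polar cone gives $E^\circ=\{(a,b):b\le -|a|\}$, i.e.\ the reflection of $E$ through the origin. By Moreau's decomposition for closed convex cones, every $(u_i,v_i)\in\mathbb{R}^2$ decomposes uniquely as $(u_i,v_i)=P_E(u_i,v_i)+P_{E^\circ}(u_i,v_i)$ with the two parts orthogonal, so it is enough to identify which of four regions the point $(u_i,v_i)$ belongs to and to write down the corresponding projection by inspection.

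The four regions are separated by the two lines of slope $\pm 1$ through the origin, namely $\{s=x\}$ and $\{s=-x\}$. I would treat them in turn:
\begin{enumerate}[label=(\roman*)]
\item If $v_i\ge |u_i|$, then $(u_i,v_i)\in E$ and the projection equals $(u_i,v_i)$.
\item If $v_i\le -|u_i|$, then $(u_i,v_i)\in E^\circ$ and the projection equals the apex $(0,0)$.
\item If $u_i>|v_i|$, the candidate is the projection onto the ray $R_+$; setting $(x_i,s_i)=\tfrac12(u_i+v_i)(1,1)$ lies in $R_+\subset E$ (since $u_i+v_i>0$ because $u_i>|v_i|\ge -v_i$) and the residual $(u_i,v_i)-(x_i,s_i)=\tfrac{u_i-v_i}{2}(1,-1)$ lies in $E^\circ$ (since $u_i-v_i>0$), with the two vectors clearly orthogonal.
\item If $u_i<-|v_i|$, the symmetric argument with the ray $R_-$ yields $(x_i,s_i)=\tfrac12(u_i-v_i,\,v_i-u_i)$.
\end{enumerate}
In each case the Moreau orthogonality identity $\langle(x_i,s_i),(u_i-x_i,v_i-s_i)\rangle=0$ together with the memberships $(x_i,s_i)\in E$ and $(u_i-x_i,v_i-s_i)\in E^\circ$ certifies the projection.

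The only mild obstacle is bookkeeping the strict versus non-strict inequalities so that the four regions cover $\mathbb{R}^2$ without contradiction on the dividing lines $\{s=\pm x\}$; this is harmless because on each such line two of the formulas above agree (e.g.\ when $u_i=|v_i|$, cases (i) or (ii) agree with (iii)), so the formula is well defined on the whole plane.
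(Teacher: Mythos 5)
Your proposal is correct, and it is essentially the completion of what the paper only sketches: the paper merely remarks that $\Omega$ is (after reordering) the product $\prod_{i=1}^n \mathrm{epi}(|\cdot|)\times\RR^n_{+}$ and that the projection is then ``easy to compute,'' stating the lemma without proof. Your argument supplies the missing verification cleanly: separability reduces everything to the scalar projections, the $y$-part is the standard projection onto $\RR_+$, and for the two-dimensional cone $E=\mathrm{epi}(|\cdot|)$ the Moreau decomposition with $E^\circ=\{(a,b): b\le -|a|\}$ certifies each of the four cases, since in every case you exhibit $(x_i,s_i)\in E$, a residual in $E^\circ$, and orthogonality between them. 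The case inequalities indeed cover $\RR^2$ and agree on the lines $s=\pm x$, as you note, so the formula is well defined; nothing further is needed.
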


The following algorithm is mainly the spectral gradient method (SPG) as presented in \cite{Birgin2014}, which we copy over for the most part, but overwrite the values of $s^k$, obtained in each iteration, by the preferable value of $|x^k|$. For ease of notation, we will refer to the target function of \eqref{PenEpi} by $F(x,s,y)$.

\begin{algo}[SPG] \label{SPG}
    Let $\beta \in (0,1), \ 0<\sigma_{\min}<\sigma_{\max}$, and $M$ be a positive integer. Let $(x^0,s^0,y^0) \in \Omega$ be an arbitrary initial point. For $k = 0, 1, 2, ...$
    \begin{enumerate}
        \item Compute $\sigma_k^{SPG} \in [\sigma_{\min}, \sigma_{\max}]$ by
        $$\sigma_k^{SPG} = \begin{cases}
            1 \quad &\text{if} \quad k = 0,\\
            \max\left\{ \sigma_{\min}, \min\left\{ \frac{(v^k)^Tw^k}{(v^k)^Tv^k}, \sigma_{\max}\right\}\right\} \quad &\text{otherwise},
        \end{cases}$$ where $v^k = (x^k,s^k,y^k) - (x^{k-1}, s^{k-1}, y^{k-1})$ and $w^k = \nabla F(x^k,s^k,y^k) - \nabla F(x^{k-1},s^{k-1},y^{k-1})$, and set 
        $$d^k := P_{\Omega}\left((x^k,s^k,y^k) - \frac{1}{\sigma_k^{SPG}}\nabla F(x^k, s^k, y^k)\right) - x^k $$
        \item Set $t \leftarrow 1$ and $F^{ref}_k = \max\{F(x^{k-j+1}, s^{k-j+1},y^{k-j+1}) \ : \ 1 \le j \le \min\{k+1,M\}\}.$ If
        \begin{equation} \label{StepSize}
            F((x^k,s^k,y^k) + t d^k) \le F_k^{ref} + t \beta \nabla F(x^k,s^k,y^k)^Td^k,
        \end{equation}
        set $t_k = t$ and set
        \begin{align}
            \begin{split}
                x^{k+1} &= x^k + t_k (d^k)_{1:n},\\
                s^{k+1} &= |x^{k+1}|,\\
                y^{k+1} &= y^k + t_k (d^k)_{2n+1 : 3n},
            \end{split}
        \end{align}
        and finish the iteration. Otherwise, choose $t_{\text{new}} \in [0.1t, 0.5t]$, set $t \leftarrow t_{\text{new}}$ and repeat the test (\ref{StepSize}).
    \end{enumerate}
\end{algo}

Notice that the update of $s^k$ by $|x^k|$ is in line with \cite{Birgin2014} as it is easy to see that in any case for a feasible tuple $(x,s,y)$ it holds $F(x,|x|,y) \le F(x,s,y)$. By continuity of the absolute value function, any accumulation point $(x^*,s^*,y^*)$ obtained from \eqref{SPG}, in particular, satisfies $s^* = |x^*|$. Notice that \eqref{PenEpi} is, in fact, a smooth NLP, as one may simply write
$$
	(x,s) \in \text{epi}(|\cdot|) \quad \Leftrightarrow \quad x - s\le 0\  \wedge \ -x-s\le0. 
$$
This implies that any other suitable smooth NLP solver can be used and,
under additional constraints of the type $h(x) = 0, \ g(x) \le 0$, Lagrangian-type methods are applicable or Lagrangian stationarity can be used to derive the Euclidean projection onto the feasible set.

\subsection{A Proximal Point Method}

We may also regard the problem \eqref{Penalpha} as a composite optimization problem, where $f_1(x,y):= f(x) + p^\rho(y)$ is the smooth part and $f_2(x,y):= \alpha |x|^Ty $ the nonsmooth function for which we have to compute the prox-operator
$$
	\text{prox}^{SP}_\gamma(u,v) := \text{arg}\min_{\substack{x,y \\ y\ge 0}} f_2(x,y) + \frac{1}{2\gamma} \norm{(x,y) - (u,v)}_2^2.  
$$
By separability of $f_2$, we have
$$
	f_2(x,y) + \frac{1}{2\gamma} \norm{(x,y)-(u,v)}_2^2 = \sum_{i = 1}^n \alpha|x_i|y_i + \frac{1}{2\gamma} \left( (x_i - u_i)^2 + (y_i - v_i)^2\right).
$$
At a solution $(x^*,y^*)$, we can always infer $\text{sign}(x_i^*) = \text{sign}(u_i)$ so that we only have to solve the optimization problem
\begin{equation} \label{ProxProb}
    \min_{x,y} \alpha xy + \frac{1}{2\gamma}\left((x - u)^2 + (y - v)^2 \right) \st x\ge0, \ y \ge0,
\end{equation}
where $u \ge 0$.

\begin{lem}
Let $u \ge 0$ and $v \in \RR$. The solution to problem (\ref{ProxProb}) is given by
$$
	(x,y) = \begin{cases}
    (u,0), \quad &\text{if} \quad v<0,\\
    \frac{1}{1 - \gamma^2 \alpha^2} (u - \gamma\alpha v, v - \gamma \alpha u), \quad &\text{if} \quad \frac{1}{\gamma} > \alpha \ \text{and}\ u \in [\gamma \alpha v, \frac{v}{\gamma\alpha}], \\
    (0,v) \frac{\text{sign}(v - u) + 1}{2} + (u,0) \frac{1 - \text{sign}(v - u)}{2}, \quad &\text{otherwise}.
\end{cases}
$$
\end{lem}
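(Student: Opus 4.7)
The plan is to solve \eqref{ProxProb} by explicit KKT analysis, exploiting the fact that the problem is a two-dimensional quadratic program over the nonnegative orthant with Hessian
$$
H = \begin{pmatrix} 1/\gamma & \alpha \\ \alpha & 1/\gamma \end{pmatrix},
$$
whose eigenvalues $1/\gamma \pm \alpha$ are both positive precisely when $\gamma\alpha < 1$. Since $\alpha xy \ge 0$ on the feasible set and the quadratic penalty terms are coercive, a global minimum exists in any case. The KKT stationarity conditions with multipliers $\mu_x,\mu_y \ge 0$ for the two nonnegativity constraints read
$$
\alpha y + \tfrac{1}{\gamma}(x-u) = \mu_x,\qquad \alpha x + \tfrac{1}{\gamma}(y-v) = \mu_y,\qquad \mu_x x = \mu_y y = 0,
$$
which I would use to enumerate the four possible activity patterns: interior ($\mu_x=\mu_y=0$), $x=0$ alone, $y=0$ alone, and $x=y=0$. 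The interior system is the $2\times 2$ linear system whose solution is exactly the second formula in the statement; the $x=0$ face yields $(0,v)$ (feasible only for $v\ge 0$), and the $y=0$ face yields $(u,0)$.

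I would then distinguish three cases matching the statement. First, if $v<0$, then any candidate with $y>0$ forces $y=v-\gamma\alpha x\le v<0$, a contradiction, so $y^*=0$ necessarily; minimizing the residual quadratic over $x\ge 0$ gives $x^*=u$. Second, if $\gamma\alpha<1$ and $u\in[\gamma\alpha v,v/(\gamma\alpha)]$, the interior candidate has both coordinates nonnegative (the numerators $u-\gamma\alpha v$ and $v-\gamma\alpha u$ are nonnegative by assumption, and $1-\gamma^2\alpha^2>0$). In this regime $H$ is positive definite, hence the objective is strictly convex on a convex feasible set, and the interior KKT point is the unique global minimum, matching the second branch. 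Third, in all remaining cases with $v\ge 0$, the minimum must lie on one of the axis faces; the objective value equals $v^2/(2\gamma)$ at $(u,0)$ and $u^2/(2\gamma)$ at $(0,v)$, so the better of the two is selected according to the sign of $v-u$, which is exactly what the convex combination formula using $\tfrac{1}{2}(\mathrm{sign}(v-u)\pm 1)$ encodes.

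The main obstacle is the nonconvex regime $\gamma\alpha\ge 1$, where the interior stationary point (if it exists) is a saddle and KKT is no longer sufficient. I would handle this by showing that no interior point can beat the axis candidates: fixing $y\ge 0$ and minimizing over $x\ge 0$ gives $x(y)=\max\{0,u-\gamma\alpha y\}$, which reduces the problem to the one-dimensional function
$$
g(y)=\alpha u y - \tfrac{\gamma\alpha^2}{2}y^2 + \tfrac{1}{2\gamma}(y-v)^2 \quad\text{on}\quad 0\le y\le u/(\gamma\alpha),
$$
glued to $g(y)=u^2/(2\gamma)+\tfrac{1}{2\gamma}(y-v)^2$ for $y\ge u/(\gamma\alpha)$. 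When $\gamma\alpha\ge 1$ the quadratic coefficient $(1-\gamma^2\alpha^2)/(2\gamma)$ in the first branch is nonpositive, so $g$ is concave there and its minimum on the closed interval is attained at an endpoint; a short comparison of $g(0)=v^2/(2\gamma)$ with the minimum of the second branch $u^2/(2\gamma)$ attained at $y=\max\{v,u/(\gamma\alpha)\}$ then produces precisely the third branch of the formula. Together with the analogous verification when $\gamma\alpha<1$ but $u\notin[\gamma\alpha v,v/(\gamma\alpha)]$ (where the unconstrained optimum of the convex problem falls outside the orthant and the convex minimum is pushed onto a face), this completes the case distinction.
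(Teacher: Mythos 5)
Your proof is correct and follows the same overall case split as the paper's: $v<0$ is handled separately; in the convex regime $\gamma\alpha<1$ with $u\in[\gamma\alpha v,\,v/(\gamma\alpha)]$ the interior stationary point is feasible and, by strict convexity, optimal; in all remaining cases the minimizer lies on one of the two axes and the better of $(u,0)$ and $(0,v)$ is selected by comparing $v^2/(2\gamma)$ with $u^2/(2\gamma)$. The genuine difference is how the boundary attainment is justified in the nonconvex regime $\gamma\alpha\ge 1$: the paper argues tersely that the unique unconstrained stationary point is a saddle (or that stationary points are absent/degenerate when $\gamma\alpha=1$) and concludes that the minimum must sit on the boundary, whereas you partially minimize over $x$, obtain $x(y)=\max\{0,\,u-\gamma\alpha y\}$, and use concavity (affinity when $\gamma\alpha=1$) of the reduced one-dimensional function on $[0,\,u/(\gamma\alpha)]$ to force endpoint attainment. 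Your route is more explicit and self-contained for the hardest case, at the cost of a slightly longer computation; the paper's is shorter but leaves the ``minimum on the boundary'' step implicit. One small imprecision in your argument: when $\gamma\alpha\ge 1$ and $0\le v<u/(\gamma\alpha)$, the minimum of your second branch is attained at $y=u/(\gamma\alpha)$ and is strictly larger than $u^2/(2\gamma)$, not equal to it; the conclusion is unaffected, since in that situation $v<u$, so $v^2/(2\gamma)$ already undercuts that value and $(u,0)$ is still the correct selection, but the sentence should be stated accordingly.
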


\begin{proof}
Assume $v < 0$. For a fixed $x$, the best possible value of $y$ for \eqref{ProxProb} is clearly $ y = 0$. In that case, however, $x$ 
is necessarily given by $x = u$.

Assume for the remainder $v \ge 0$. First notice that \eqref{ProxProb} is coercive on the feasible set which implies that a solution always exist. 
We discern the following cases.
\begin{enumerate}
    \item Let $1 / \gamma > \alpha$, then the target function of (\ref{ProxProb}) is uniformly convex. Furthermore, for $u \in [\gamma \alpha v, \frac{v}{\gamma\alpha}]$, we have
\begin{equation*}
    1 - \gamma^2 \alpha^2 > 0, \quad u - \gamma\alpha v\ge 0, \quad v - \gamma \alpha u \ge 0,
\end{equation*}
and the point 
$$
	(x,y) = \frac{1}{1 - \gamma^2 \alpha^2} (u - \gamma\alpha v, v - \gamma \alpha u)
$$
is both a stationary and feasible point of the objective function and hence the solution.
\item Let $1/\gamma > \alpha$ and $u \notin [\gamma \alpha v, \frac{v}{\gamma \alpha}]$. Then the target function of \eqref{ProxProb} has only one (unconstrained) stationary point, which is not feasible. 
Therefore, the solution must lie on the boundary, so that either $(x,y) = (u,0)$ if $v < u$ or $(x,y) = (0,v)$ if $u < v$.
\item Let $1 / \gamma = \alpha$. Then the target function has infinitely many stationary points if and only if $u = v$ with condition $(x + y) = u$, or there are no (unconstrained) stationary points. In any case, the minimal value is attained at a boundary point $(x,y) = (u,0)$ if $v < u$ or $(x,y) = (0,v)$ if $u \le v$.

\item         Finally, let $1 / \gamma < \alpha$. The target function has only one stationary point, which is necessarily a saddle point. The only solutions are found on the boundary so that again $(x,y) = (u,0)$ if $v < u$ or $(x,y) = (0,v)$ if $u \le v$.
\end{enumerate}
Altogether the desired representation follows.
\end{proof}

The proximal gradient method is then implemented with a nonmonotone stepsize as for instance seen in \cite{DeMarchi2023}.

Note that we have only stated the projection and, respectively, the proximal point operator in an unconstrained case, to show that either method is very well realizable. Depending on the optimization problem, of course, modifications are necessary, or additional constraints have to, for instance, be augmented and solved via an augmented Lagrangian approach.

\section{Numerical Results}\label{section:numerics}

\subsection{Sparse Portfolio Optimization}
\subsubsection{Problem Formulation}

To measure the quality of a portfolio of investments, one possible formulation of the Markov model \cite{Markowitz1952} takes an estimated risk, given by some covariance matrix $Q$, and compares it to a possible return, given by some vector $\mu$. The investor has a certain amount of capital to work with, and we assume, additionally, that short sales are possible. We therefore consider the problem
\begin{equation}
    \min_x \ \frac{1}{2}x^TQx - \beta\mu^Tx + \rho\norm{ x }_0 \st e^Tx = 1. \label{SpPort}
\end{equation}
Since the investor might be dissuaded from portfolio strategies with a wide spread, a sparsity-inducing penalty term is in place. Using the $\ell_1$-norm, at any feasible point, we have
$$
	\norm{x}_1 = \sum_{x_i \ge 0} x_i + \sum_{x_i < 0} |x_i|, \quad \sum_{x_i\ge 0}x_i -\sum_{x_i <0}|x_i| = 1 .
$$
Hence, a spread without shorting always leads to $\norm{x}_1 = 1$, with possibly no sparsity in $x$.
For a spread with shorting, on the other hand, we necessarily have $\norm{x}_1 > 1$. It follows that the $\ell_1$-regularization is a bad choice, since it favors spreads without shorting and possibly with no sparsity.

It is well known that the above problem \eqref{SpPort} can be 
represented as a quadratic mixed integer problem of the form
$$
	\min_{\substack{x\in\mathbb{R}^n, \\ y\in\{0,1\}^n}} \ \frac{1}{2}x^TQx - \beta\mu^Tx + \rho\norm{y}_1 \st e^Tx = 1, \ y_il_i\le x_i \le y_iu_i, \ i = 1,...,n,
$$ 
where $l$ and $u$ denote some (sufficiently large) lower and upper bounds
satisfying $l \le 0 \le u$. Specialized combinatorial solver can be applied to solve the mixed integer formulation, for instance, Gurobi.

To solve (\ref{SpPort}) with our exact penalty approach, we focus on the spectral gradient method. The reason for this is that we require a projection onto the set
\begin{equation}
	X = \{(x,s) : \ e^Tx = 1, \ x - s \le0, \ -x-s\le 0\}. \label{portfolioset}
\end{equation}
Assume we want to project the vector $(a,b)$. Necessary and sufficient for the solution $(x^*,s^*)$ is the existence of multipliers $\lambda^+, \ \lambda^- \in \mathbb{R}^n_{\ge 0}$ and $\mu \in \RR$ that fulfill
\begin{align}\begin{pmatrix}
    x^* - a + \mu e + \sum \lambda_i^+e_i - \sum \lambda_i^-e_i \\ s^* - b -\sum\lambda_i^+ e_i - \sum \lambda_i^-e_i
\end{pmatrix} &= 0,\label{eqn:KKT} \\e^Tx^*&=1 \label{eqn:eqconst1},\\
\lambda^+ \ge 0, \quad s^*-x^* \ge 0, \quad (s^*-x^*)^T\lambda^+ &= 0, \label{eqn:comp1}\\
\lambda^- \ge0, \quad s^* +x^* \ge 0, \quad (s^* +x^*)^T\lambda^- &= 0. \label{eqn:comp2}
\end{align}
A simple calculation shows that we require
$$
	s^* - x^* - 2\lambda^+ = b - a + \mu e,\quad \text{and} \quad s^* + x^* - 2\lambda^- = a + b - \mu e.
$$
From the complementarity in \eqref{eqn:comp1} and \eqref{eqn:comp2}, we have
\begin{align*}
    s^* - x^* &= \max\{0, b - a + \mu e\}, & s^* + x^* &= \max\{0, a + b - \mu e\}, \\
    2\lambda^+ &= \max\{0,a-b-\mu e\}, & 2\lambda^- &= \max\{0, \mu e - a - b\}.
\end{align*}
By \eqref{eqn:eqconst1}, it holds
$$
	e^T(s^* + x^*) - e^T(s^* - x^*) = 2
$$
or, equivalently, we need $\mu$ as a root of 
\begin{equation}
    t(u):=\sum_i \max\{0,a_i +b_i - u\} - \max\{0,b_i - a_i + u\} - 2. \label{eqn:auxroot}
\end{equation}
It is easy to see that $t$ is continuous and monotonically decreasing. A root $\mu$ of $t$ must exist, since clearly there is a solution to \eqref{eqn:KKT}-\eqref{eqn:comp2}. Set
$$
	u_- := \max\{\max_i\{a_i + b_i\}, \max_i\{a_i - b_i\}\}, \quad u_+:=\min\{\min_i\{a_i +b_i\}, \min_i\{a_i -b_i\}\} - \frac 2 n.
$$
Plugging the two values into $t$, one has $t(u_-) \le 0$ and $t(u_+)\ge 0$ and, therefore, $u_-$ and $u_+$ can be used as the start of a bisection method to obtain $\mu$. We have the following Lemma.

\begin{lem}
    Consider the set $X$ in \eqref{portfolioset} and let $(x^*,s^*) = P_X(a,b)$ denote the euclidean projection of $(a,b)$ onto $X$. Then
    \begin{align*}x^* &= \frac 1 2 \left(\max\{0,a + b - \mu e\} - \max\{0, b - a + \mu e\}\right),\\
    s^* &= \frac 1 2 \left(\max\{0, b - a + \mu e\} + \max\{0,a + b - \mu e\}\right),\end{align*}
    where $\mu$ is a root of $t$ as given in \eqref{eqn:auxroot}.
\end{lem}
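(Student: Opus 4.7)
The approach is straightforward: the set $X$ defined in \eqref{portfolioset} is closed, convex, and nonempty (e.g., $(e/n, |e/n|) \in X$), so the Euclidean projection $(x^*,s^*) = P_X(a,b)$ exists and is unique as the minimizer of the strongly convex quadratic $\tfrac{1}{2}\|(x,s)-(a,b)\|^2$ on $X$. Since all constraints defining $X$ are affine, the KKT conditions are both necessary and sufficient for optimality. I would therefore start by writing down the Lagrangian with multipliers $\mu \in \RR$ for $e^T x = 1$ and $\lambda^+, \lambda^- \in \RR^n_{\ge 0}$ for the inequalities $x-s \le 0$ and $-x-s \le 0$, obtaining precisely system \eqref{eqn:KKT}--\eqref{eqn:comp2}.

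The central algebraic step is to eliminate $\lambda^+$ and $\lambda^-$ and convert the complementarity conditions into the two max-identities
\[
s^* - x^* = \max\{0, b - a + \mu e\}, \qquad s^* + x^* = \max\{0, a + b - \mu e\}.
\]
These follow by a componentwise case analysis: for each index $i$, the pair $(s^*_i - x^*_i, \lambda^+_i)$ satisfies the complementarity system $s^*_i - x^*_i \ge 0$, $\lambda^+_i \ge 0$, $(s^*_i - x^*_i)\lambda^+_i = 0$ together with $(s^*_i - x^*_i) - 2\lambda^+_i = b_i - a_i + \mu$, which has the unique solution $s^*_i - x^*_i = \max\{0, b_i - a_i + \mu\}$ and $2\lambda^+_i = \max\{0, a_i - b_i - \mu\}$; the analogous argument with $\lambda^-_i$ yields the second identity. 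Adding and subtracting these two expressions and dividing by two gives the claimed closed-form formulas for $x^*$ and $s^*$.

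It remains to characterize $\mu$. Substituting the formula for $x^*$ into the equality constraint $e^T x^* = 1$ directly produces the equation $t(\mu) = 0$ with $t$ as in \eqref{eqn:auxroot}. The existence of such a $\mu$ could in principle be shown via the intermediate value theorem applied to $t$ at the bounds $u_+, u_-$ introduced earlier in the text, but in fact it is guaranteed a priori: since the projection exists, the KKT system admits a solution, and hence a multiplier $\mu$ satisfying $t(\mu)=0$ must exist.

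The main obstacle in this proof is the componentwise case analysis that converts the linear-plus-complementarity system for $(s^*_i \pm x^*_i, \lambda^{\pm}_i)$ into the max expressions; everything else, including the final assembly, is purely bookkeeping using the steps already spelled out in the paragraph preceding the lemma. Since that paragraph essentially performs the derivation, the proof itself can be kept very short, essentially amounting to the remark that the lemma summarizes the KKT analysis carried out above.
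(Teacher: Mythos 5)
Your proposal is correct and follows essentially the same route as the paper, which proves the lemma by the KKT-based derivation in the paragraph preceding it: stationarity plus componentwise complementarity yields the max-identities for $s^*\mp x^*$, adding/subtracting gives the closed forms, and the constraint $e^Tx^*=1$ forces $t(\mu)=0$, whose solvability follows from the existence of the projection (hence of KKT multipliers). No gaps to report.
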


\subsubsection{Numerical Tests}
For the numerical test, we use an instance of 90 generated covariance matrices $Q$ and return vectors $\mu$ from dimensions $n = 200$, $n = 300$ and $n = 400$ provided by Frangioni and Gentile\footnote{\url{http://groups.di.unipi.it/optimize/Data/MV.html}}. We used Gurobi\footnote{\url{https://www.gurobi.com/}} to determine a best possible value of each test problem. Gurobi was set to run for at least 60 seconds in all of the 90 cases and we compared the results to our exact penalty method. The computation were carried out via Python, where we chose $\alpha_0 = \rho = \beta = 1$.  The spectral gradient was run for $1000$ iterations or until a stationarity to the tolerance of $10^{-4}$ was reached. In the outer loop of the exact penalty method, we set $\alpha_{k+1} = 2 \alpha_k$ until a complementarity $\lVert s^k \circ y^k\rVert_{\infty} <10^{-3}$ was reached. On average, the required computation time for a full run of the exact penalty method for a single instance was
$$
	t_{200} = 3.94s, \quad t_{300} = 5.69s, \quad t_{400} = 6.66s,
$$
for dimensions $200$ to $400$, respectively (Figure \ref{PortfolioBench}).

\begin{figure}[H]
\centering
    \begin{subfigure}{\linewidth}
        \centering
        \includegraphics[width =\linewidth]{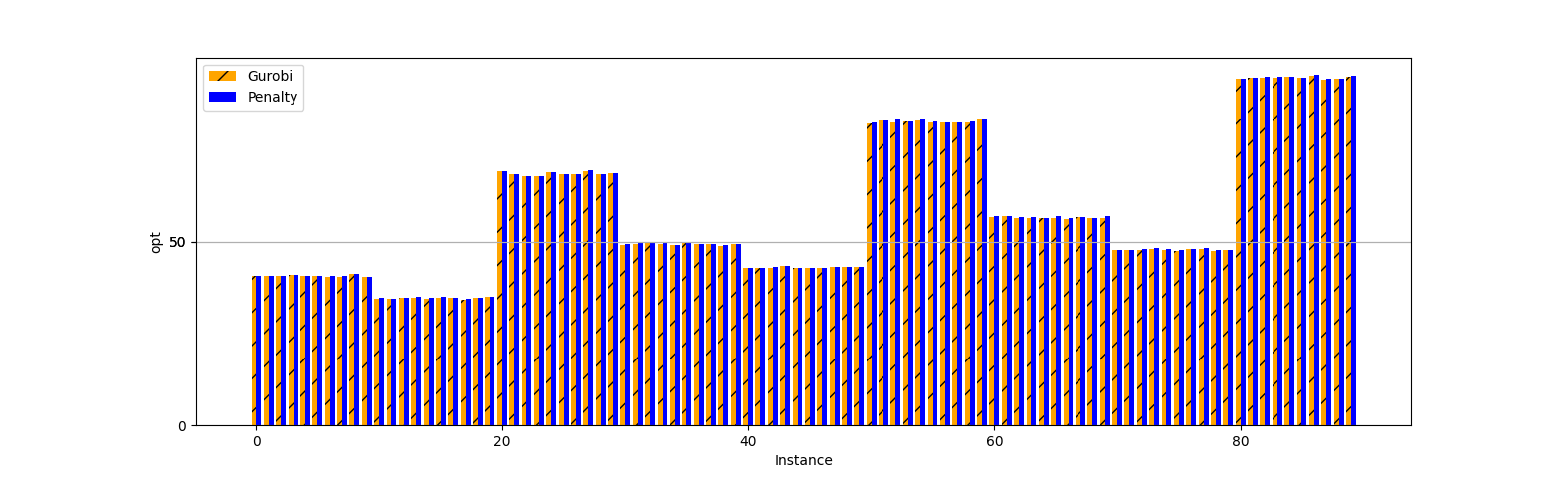}
        \subcaption{Optimal values of $f(x) + \lVert x\rVert_0$ reached by Gurobi and the penalty approach}
        \label{fig:GurobiVsPenaltyHuber}
    \end{subfigure}
    \begin{subfigure}{\linewidth}
        \centering
        \includegraphics[width = \linewidth]{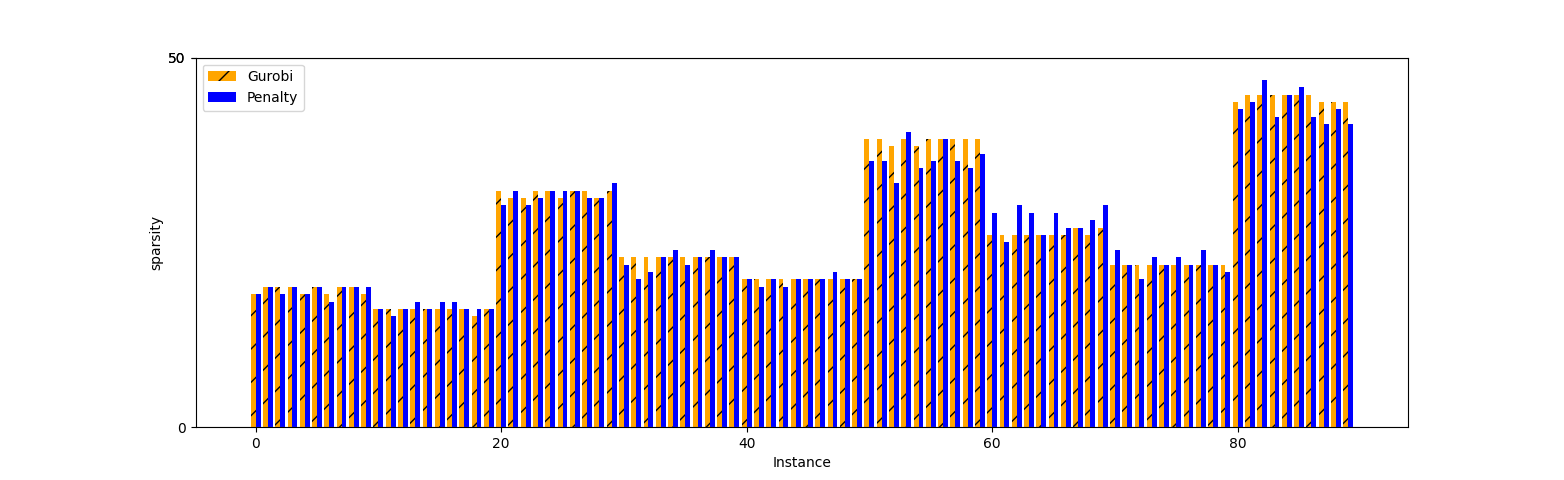}
        \subcaption{Sparsity reached by Gurobi and the penalty approach}
        \label{fig:GurobiVsPenalty(b)}
    \end{subfigure}
    \caption{Overview of the sparse portfolio tests.}
    \label{PortfolioBench}
\end{figure}

\subsection{Sparse Dictionary Learning}

\subsubsection{Problem Formulation}

We consider the matrix-valued optimization problem
$$
	\min_{C,D} \frac{1}{2} \left\lVert {D^TC - Z} \right\rVert_F^2 + \rho \norm {C}_0\st \lVert d_i^T\rVert_2 \le 1,
$$
where $D \in \mathbb{R}^{l\times n}$ is considered a dictionary with rows $d_i^T$ from which a couple of entries are drawn to create matrix $Z \in \mathbb{R}^{n\times m}$. This process is represented by multiplying $D^T$ with a suitable sparse matrix $C$, which gives rise to the formulation with the sparsity inducing $\ell_0$-norm regularization term. Note that, in this context, we use
$$
	\norm {C}_0 = \norm{ \text{vec}(C) }_0.
$$
For this class of problems, the feasible set does not give any restrictions for $C$, the variable we impose sparsity on, therefore projection onto the epigraph of the componentwise absolute value function as well as the proximal operator $\text{prox}^{SP}_\gamma$ can be computed in a 
straightforward manner. Since replacing the $\ell_0$-norm by the $\ell_1$-norm is a popular approach that turns the objective function into a biconvex optimization problem, we compare in the following the results achieved by a hard-thresholding approach, using the proximal operator of the $\ell_0$-norm to a soft-thresholding approach, using the proximal operator of the $\ell_1$-norm, and finally the exact penalty with spectral gradient and the proximal point method.

\subsubsection{Numerical Tests}

We copy the setup from \cite{DeMarchi2023} and create $100$ test problems, with $n = 100$, $l = 200$ and $m = 300$. The entries of $C^0$ and $D^0$ are drawn from a standard normal distribution $\mathcal{N}(0,1)$, projected onto the feasible set. Again, for simplicity, we set $\rho = 1$. In case of the exact penalty method, the inner solver was set to run for $10^4$ iterations or a until a stationarity of tolerance $10^{-5}$ was reached. We also set $\alpha_0= 1$ and $\alpha_{k+1} =1.5\, \alpha_k$ until 
$$
	\langle |C^k|, Y^k\rangle_F := \text{trace} ((C^k)^T Y^k) \le 10^{-3}.
$$
In case of the hard and soft thresholding method, we set the maximum number of iterations to $10^5$ and the tolerance of stationarity to $10^{-6}$. We draw the performance profiles regarding the optimal function values reached and the required computational time.
\begin{figure}[ht]
\centering
    \begin{subfigure}{.4\linewidth}
        \includegraphics[width = \linewidth]{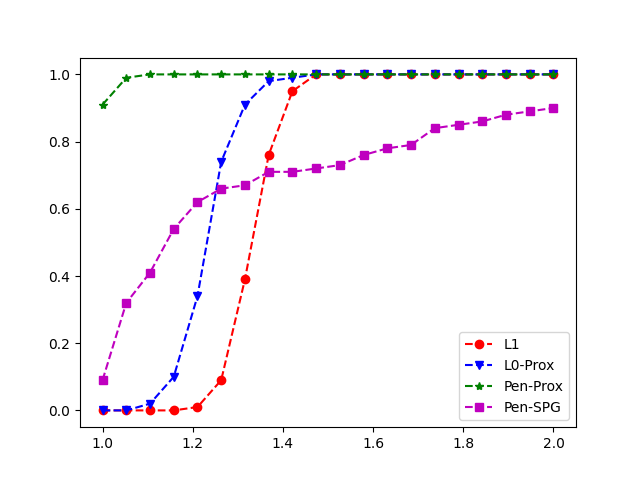}
        \subcaption{Performance profile on the optimal target value.}
        \label{fig:DictVal}
    \end{subfigure}
    \begin{subfigure}{.4\linewidth}
        \includegraphics[width = \linewidth]{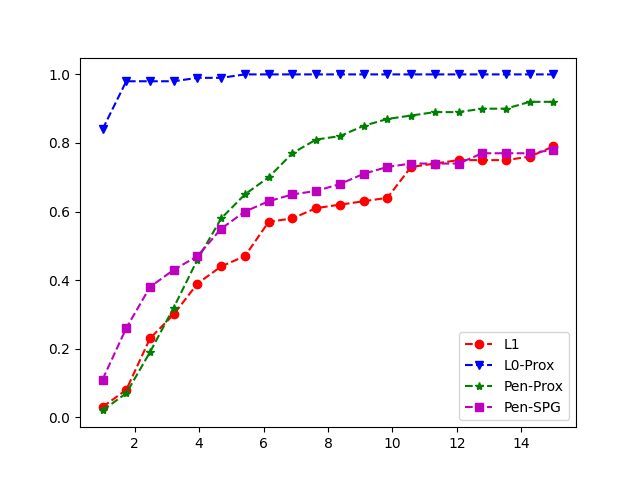}
        \subcaption{Performance profile on the computation time.}
        \label{fig:DictTime}
    \end{subfigure}
    \caption{Overview of the sparse portfolio tests.}
\end{figure}
As it turns out, the exact penalty method with the inner proximal point solver (Pen-Prox) has a chance of approximately $90\%$ of achieving the best possible value, while the spectral gradient (Pen-SPG) was able to find the best possible value in $10\%$ of the cases (Figure \ref{fig:DictVal}). Both thresholding methods were never able to find the best possible solution. The spectral gradient manages to stay in the range of the optimal solution for approximately $60\%$ of all cases and is then outperformed by the solver $\ell_0$ and $\ell_1$.

Considering computational time, the $\ell_0$ solver wins the race, which was to be expected. Surprisingly enough, the penalty approaches stay well within the performance of the soft-thresholding operator, with the Pen-Prox overtaking and landing at $80\%$ chance to require roughly ten times the computational time of the hard-thresholding solver (Figure \ref{fig:DictTime}).

\subsection{Sparse Untargeted Adversarial Attacks on Neural Networks}

\subsubsection{Problem Formulation}
Let $M := \{(x^d,y^d) \in \mathcal{X} \times \mathcal{Y}, \ d = 1,...,S\}$ denote a set of data samples, with sample space $\mathcal{X} \subset \mathbb{R}^n$ and label space $\mathcal{Y} \subset \mathbb{R}^m$. A neural network is a function $\phi \ : \ \mathbb{R}^n \to \mathbb{R}^m$, which aims to satisfy $$P\left(\phi(x^d)\right) = y^d, \quad \forall d =1,...,S,$$
where $\phi$ is a composition of several functions
$$\phi = f_l \circ f_{l-1} \circ \cdots \circ f_1,$$
each corresponding to a layer $k = 1,...,l$, comprised of a weight matrix $W^k$, a bias $b^k$ and some activation function $\sigma_k \, : \, \mathbb{R}^{n_k} \to \mathbb{R}^{m_k}$ so that essentially $f_{k}(x) = \sigma_{k}(W^k f_{k-1}(x) + b^k)$ and $f_0 = \text{id}$. The function $P(\cdot)$ processes the vector of logits (the raw output of network $\phi$) in a suitable way. Training of a neural network is carried out by the minimization of some loss function $L$ across the data samples in $M$, in the sense that one has to solve
$$\min_{\{(W^k,b^k), \ k = 1,...,l\}} \frac{1}{S}\sum_{d = 1}^S L\left(P(\phi(x^d)),y^d\right).$$
Assume that $y^d$ is a probability mass function, which belongs to some discrete distribution, that is $\mathcal{Y} =\{y \in \mathbb{R}^m \ : \ y \ge 0, \ \lVert y \rVert _1 = 1\}$. One chooses $P$ as a function to carry the logits into the probability space $P(\phi(x^d)) = p^d$, for instance, by the softmax
$$p^d = \text{softmax}(\phi(x^d)) :=\frac{z^d}{\norm{ z^d }_1}, \quad \text{with} \quad z^d_i = \exp\left(\phi(x^d)_i\right),$$
to then minimize a statistical distance between $y^d$ and $p^d$, where now $p_i^d$ measures the probability of $x^d$ belonging to class $i \in \{1,...,m\}$.  For this purpose, one usually considers the Kullback-Leibler divergence
$$KL(y^d,p^d) := \sum_{i = 1}^m y_i^d \log\left(\frac{y_i^d}{p_i^d}\right),$$
and, since $y^d$ is fixed, we may simply choose $L(P(\phi(x^d)),y^d) := - \sum_{i = 1}^my_i^d \log(p_i^d),$ which is commonly called the 
\emph{cross entropy loss}. 
We now assume $y^d$ to be one-hot encoded, which means that $y^d= e_{i_d}$, i.e. $y^d$ is the $i_d$-th unit vector of $\mathbb{R}^m$. The model $\phi$ should then satisfy
$$\underset{i = 1,...m}{\text{argmax}}\  P(\phi(x^d)) = i_d.$$

In the sparse adversarial attack setting, the model $\phi$ has already been trained and we try to find a suitable deviation $\delta$, where $\lVert \delta \rVert_0$ is small, to throw off the previous classification of an input $x^d$, that is
$$\underset{i = 1,...m}{\text{argmax}}\  P(\phi(x^d + \delta)) \neq \underset{i = 1,...,m}{\text{argmax}}\ P(\phi(x^d)). $$
Since the latter cannot be chosen as a constraint, we instead want to minimize the auxiliary function
$$
	F(\delta) = \log(p_{i_d}^d(\delta)) - \sum_{\substack{j = 1 \\ j \neq i_d}}^m \log(p_j^d(\delta)), \quad \text{with} \quad p^d(\delta) = P(\phi(x^d + \delta)).
$$
This reflects the idea that, by minimizing the cross entropy loss, $\log(p_{i_d}^d)$ has to have been large, since $i_d$ was the predicted label, whereas the logarithm with respect to the other classes $\{1,...,n\} \setminus\{i_d\}$, must have been small. This, of course, is simply a heuristic approach, as it is, for instance, unclear whether an appropriate scaling between the two summands in $F$ should be in place. We now try to find solutions $\delta^\rho$ of
$$
	\min_{\delta} \frac{1}{\rho} F(\delta) + \norm{ \delta }_0, \st x+\delta \in \mathcal{X}
$$
for decreasing values of $\rho$ until for a solution $\delta^\rho$, 
the input $x^d + \delta^\rho$ is classified differently from $x^d$. This approach was presented in \cite{Carlini2017}, albeit with a different selection of penalty functions $F$ and, additionally, the authors of the aforementioned work focused on targeted network attacks, where a certain class was given, $x^d$ should be recognized as. This might in some cases lead to better results, since a model tasked with discerning, for instance, numbers might confuse a one with a seven easier than a one with an eight.

\subsubsection{Numerical Test}

We execute the adversarial attacks on the well known MNIST data set, which consist of $60000$ hand-drawn black and white images with $28 \times 28$ pixels, that is $\mathcal{X} = [0,1]^{28 \times 28}$,
depicting numbers from $0$ to $9$. The set of labels can therefore be represented as $\mathcal{Y} = \{0,1\}^{10}$.

We set up a neural network of the forward structure as specified in 
Table~\ref{tab:MNIST}. We chose a dropout of $0.2$ and activated the convolution and fully connected layers with the sigmoid function each.

\begin{center}
    \begin{table}[H]
        \centering
        \begin{tabular}{|c|}\hline
             Convolution 2D layer with 32 filters and kernel size 3 \\ \hline
             Convolution 2D layer with 32 filters and kernel size 3\\ \hline
             Average Pooling 2D layer of size $2 \times 2$\\ \hline
             Convolution 2D layer with 64 filters and kernel size 3 \\ \hline
             Convolution 2D layer with 64 filters and kernel size 3 \\ \hline
             Average Pooling 2D layer of size $2 \times 2$\\ \hline
             Fully connected layer of size $200$\\ \hline
             Fully connected layer of size $200$ \\ \hline
             Dense output layer of size $10$\\ \hline
        \end{tabular}
        \caption{Layers of the MNIST Model}
        \label{tab:MNIST}
    \end{table}
\end{center}
The training was carried out via Tensorflow\footnote{\url{https://www.tensorflow.org/}}, which is freely available as a python package, and computation was done on a Nvidia RTX 3070 GPU, where we attained an accuracy of $99.92 \%$ and a loss of $0.0022$ across the training set, and an accuracy of $99.22 \%$ and a loss of $0.0352$ on the test set.

We deployed an adversarial attack on the first $100$ data samples of the training set. To this end, we chose $\rho^0 = 10$ and $\rho_{k+1} = 0.9 \cdot \rho_k$ until a solution $\delta^{\rho_k}$ of
$$
	\min_\delta \ \frac{1}{\rho_k} F(\delta) + \norm{ \delta }_0, \st 0\le x^d + \delta\le1 
$$
would satisfy
$$
	\underset{i = 1,...m}{\text{argmax}}\  P(\phi(x^d + \delta)) \neq \underset{i = 1,...,m}{\text{argmax}}\ P(\phi(x^d)). 
$$
The problems were solved with, on the one hand, a straightforward application of the $\ell_0$ proximal gradient method (L0-Prox) and, on the other hand, the exact penalty method with the spectral gradient inner solver (Pen-SPG) and proximal inner solver (Pen-Prox). To compute $\nabla F$, we rely on the automatic differentiation functionality of Tensorflow to obtain the gradient of $\phi$ with respect to the input $x^d + \delta$. The exact penalty parameter was initialized to $\alpha^0 = 1.0$ and increased in increments of $10$ until complementarity was reached to a tolerance of $10^{-3}$. The inner solvers were set to run for $10^3$ iterations until either stationarity of tolerance $10^{-4}$ was reached or the algorithm did no longer measure any progress in the attained function values over $10$ iterations.

Notice that we had to slightly adapt projections and prox-operators to incorporate the projection of $\delta$ onto $\delta_{ij} \in [-x_{ij}^d, 1-x_{ij}^d]$. It turns out that Pen-SPG behave significantly better than Pen-Prox, while also beating the hard-threshold approach L0-Prox in most of the cases. Pen-SPG generated adversarial images with on average $\norm{ \delta }_0 \approx11$, whereas L0-Prox generated adversarial images with $\norm{ \delta }_0 \approx 17$ and Pen-Prox coming in last with $\norm{ \delta }_0 \approx 29$. The performance profile on the attained sparsity of $\delta$ and the required computational time is detailed in Figure \ref{fig:AdvVal} and Figure \ref{fig:AdvTime}. Not surprisingly, the hard thresholding operator is, again, significantly ahead in terms of computational time.
    
\begin{figure}[ht]
\centering
    \begin{subfigure}{.4\linewidth}
        \includegraphics[width = \linewidth]{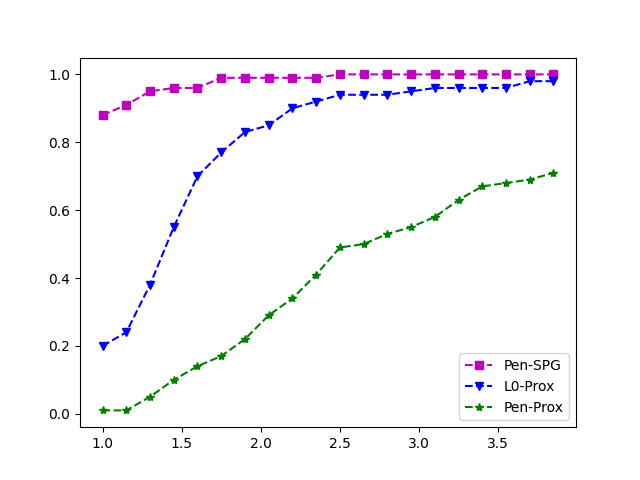}
        \subcaption{Performance profile on the optimal target value.}
        \label{fig:AdvVal}
    \end{subfigure}
    \begin{subfigure}{.4\linewidth}
        \includegraphics[width = \linewidth]{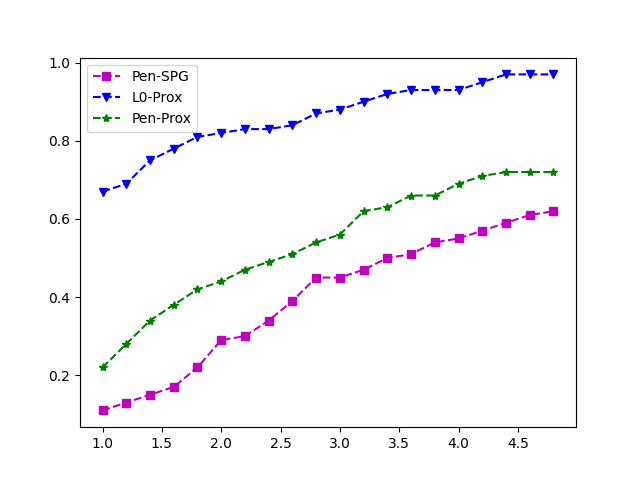}
        \subcaption{Performance profile on the computation time.}
        \label{fig:AdvTime}
    \end{subfigure}
    \caption{Overview of the adversarial attack tests.}
\end{figure}

\section{Final Remarks}\label{Sec:FinalRemarks}

This paper generalizes the authors' previous work \cite{KanzowWeiß2023}
and presents an exact penalty technique for sparse optimization
problems where the variables are not necessarily sign-constrained.
The theoretical results provide a very strong relation between 
the exact penalty approach and the given sparse optimization problem,
even in terms of stationary points, and the numerical results look
very promising.

In contrast to \cite{KanzowWeiß2023}, however, the newly introduced penalty
function is nonsmooth like most exact penalty functions for standard
nonlinear programs. On the other had, it is known that there exist
differentiable exact penalty techniques, see, e.g., the survey
article \cite{DiPillo-1994}. These differentiable exact penalty
methods have a very strong theoretical background, but their
practical application is limited due to the fact that their
evaluation is, in general, very expensive. In our particular
reformulation of the sparsity term, however, the resulting
constraints have a very simple structure, in which case the
differentiable exact penalty functions might be implemented in a 
much more efficient way than for general constraints. We therefore
leave it as part of our future research to investigate the 
application of differentiable exact penalty functions to our particular
reformulation of sparse optimization problems.

\section*{Statements}

\subsection*{Competing Interests}

There are no competing interests to report.

\subsection*{Data Availability}

We summarize the data availability for each of the considered test problems in the list below.
\begin{enumerate}
    \item The mixed-integer portfolio optimization problems by Frangioni and Gentile are publicly available (c.f. footnote 1). To our knowledge, the problems are randomly generated. The specific instance for this manuscript lies with the authors and is available on request.
    \item The sparse dictionary learning instances have been generated procedurally and randomly from a fixed random seed with the NumPy\footnote{\url{https://numpy.org/}} package available for Python. 
    \item The MNIST data set for the considered adversarial attack tests is publicly available and, in our case, was downloaded directly from TensorFlow. The CNN used for the classification task was trained as stated and saved as keras file. It is available on request.
\end{enumerate}

\bibliographystyle{habbrv}
\bibliography{references_penalty}
\end{document}